\newtheorem{theorem}{Theorem}[section]
\newtheorem{lemma}[theorem]{Lemma}
\newtheorem{proposition}[theorem]{Proposition}
\newtheorem{corollary}[theorem]{Corollary}
\theoremstyle{definition}
\newtheorem{definition}[theorem]{Definition}
\newtheorem{example}[theorem]{Example}
\theoremstyle{remark}
\newtheorem{remark}[theorem]{Remark}
\numberwithin{equation}{section}
\newcommand{\e}{\varepsilon}
\newcommand{\vp}{\varphi}
\DeclareMathOperator{\mis}{\mathfrak{M}}
\DeclareMathOperator{\Lip}{Lip}
\DeclareMathOperator{\Inv}{Inv}
\newcommand{\C}{\mathbb{C}}
\newcommand{\N}{\mathbb{N}}
\newcommand{\U}{\mathbf{1}}
\newcommand{\Sp}{\text{\textsc{sp}}}
\newcommand{\vsp}{\vec{\text{\textsc{sp}}}}
\newcommand{\tp}{\otimes}
\newcommand{\ptp}{\mathbin{\wh{\otimes}}}
\newcommand{\wh}{\widehat}
\newcommand{\A}{\mathscr{A}}
\newcommand{\fA}{\mathfrak{A}}
\newcommand{\cB}{\mathcal{B}}
\newcommand{\cF}{\mathcal{F}}
\newcommand{\cE}{\mathcal{E}}
\newcommand{\cG}{\mathcal{G}}
\newcommand{\bfa}{\mathbf{a}}
\newcommand{\set}[1]{\{#1\}}
\newcommand{\bigset}[1]{\bigl\{ #1 \bigr\}}
\newcommand{\Bigset}[1]{\Bigl\{ #1 \Bigr\}}
\newcommand{\biggset}[1]{\biggl\{ #1 \biggr\}}
\newcommand{\abs}[1]{\left\lvert #1 \right\rvert}
\newcommand{\bigabs}[1]{\bigl\lvert #1 \bigr\rvert}
\newcommand{\Bigabs}[1]{\Bigl\lvert #1 \Bigr\rvert}
\newcommand{\prn}[1]{(#1)}
\newcommand{\bigprn}[1]{\bigl( #1 \bigr)}
\newcommand{\Bigprn}[1]{\Bigl( #1 \Bigr)}
\newcommand{\biggprn}[1]{\biggl( #1 \biggr)}
\newcommand{\enorm}{\lVert\,\cdot\,\rVert}
\newcommand{\norm}[1]{\lVert #1 \rVert}
\newcommand{\lVertt}{\lvert\mspace{-2mu}\lvert\mspace{-2mu}\lvert}
\newcommand{\rVertt}{\rvert\mspace{-2mu}\rvert\mspace{-2mu}\rvert}
\newcommand{\enormm}{\lVertt\, \cdot \,\rVertt}
\newcommand{\normm}[1]{\lVertt #1 \rVertt}
\begin{document}
\setcounter{page}{1}

\title[Vector-valued spectra]%
{Vector-valued spectra of Banach algebra valued continuous functions}

\author[M. Abtahi, S. Farhangi]{Mortaza Abtahi$^1$ and Sara Farhangi$^2$}

\address{$^{1}$ School of Mathematics and Computer Sciences,
Damghan University, Damghan, P.O.BOX 36715-364, Iran.}
\email{\textcolor[rgb]{0.00,0.00,0.84}{abtahi@du.ac.ir; mortaza.abtahi@gmail.com}}

\address{$^{2}$ School of Mathematics and Computer Sciences,
Damghan University, Damghan, P.O.BOX 36715-364, Iran.}
\email{\textcolor[rgb]{0.00,0.00,0.84}{mehmandost@std.du.ac.ir; sfarmdst@gmail.com}}


\subjclass[2010]{Primary 46J10; Secondary 46J20.}

\keywords{Commutative Banach algebras, Banach function algebras,
Vector-valued function algebras, Vector-valued characters, Vector-valued spectra.}


\begin{abstract}
  Given a compact space $X$, a commutative Banach algebra $A$,
  and an $A$-valued function algebra $\A$ on $X$, the notions of
  vector-valued spectrum of functions $f\in\A$ are discussed.
  The $A$-valued spectrum $\vsp_A(f)$ of every $f\in\A$ is defined
  in such a way that $f(X) \subset \vsp_A(f)$. Utilizing the
  $A$-characters introduced in (M. Abtahi, \textit{Vector-valued characters
  on vector-valued function algebras}, Banach J. Math. Anal.
  (\emph{to appear}) \texttt{arXiv:1509.09215}), it is proved that
  $\vsp_A(f) = \set{\Psi(f):\text{$\Psi$ is an $A$-character of $\A$}}$.
  For the so-called natural $A$-valued function algebras, such as $C(X,A)$ and
  $\Lip(X,A)$, we see that $\vsp_A(f)=f(X)$. When $A = \C$, Banach $A$-valued
  function algebras reduce to Banach function algebras, $A$-characters reduce
  to characters, and $A$-valued spectra reduce to usual spectra.
\end{abstract}

\maketitle

\section{Introduction}

Let $A$ be a commutative complex Banach algebra with a unit element $\U$, and let
$\mis(A)$ denote the character space (maximal ideal space) of $A$.
Given $a\in A$, the spectrum $\Sp(a)$ of $a$ consists of those complex numbers $\lambda$ for which
$\lambda\U-a$ is not invertible in $A$. It is known that $\Sp(a)$ is compact and
\begin{equation}\label{eqn:intro:sp(a)=phi(a)}
  \Sp(a)=\set{\phi(a):\phi\in\mis(A)}.
\end{equation}

\subsection{Vector-valued Spectra}
Let $n$ be a positive integer and $\bfa=(a_1,\dotsc,a_n)$ be an $n$-tuples of elements of $A$.
The \emph{joint spectrum} of $\bfa$, again denoted by $\Sp(\bfa)$, is defined to be
the set of all $n$-tuples $(\lambda_1,\dotsc,\lambda_n)$ of scalars in $\C$
such that the unit element $\U$ does not belong to the ideal generated of $A$
by $\set{\lambda_i\U-a_i:1\leq i \leq n}$. That is,
\begin{equation}\label{eqn:intro:Sp(bfa)-dfn}
  \Sp(\bfa)=\Bigset{(\lambda_1,\dotsc,\lambda_n)\in\C^n:
    \U \notin \sum_{i=1}^n A(\lambda_i\U-a_i)}.
\end{equation}

\noindent
It is proved (e.g. \cite{Gamelin-UA}) that $\Sp(\bfa)$
is a compact set in $\C^n$ and
\begin{equation}\label{eqn:intro:Sp(bfa)-thm}
  \Sp(\bfa)=\bigset{(\phi(a_1),\dotsc,\phi(a_n)):
    \phi\in \mis(A)}.
\end{equation}

If one sets $X=\set{1,\dotsc,n}$, every $n$-tuples $\bfa=(a_1,\dotsc,a_n)$ can be seen as
an $A$-valued function $\bfa:X\to A$, $i\mapsto a_i$.
For every $\phi\in\mis(A)$, the composition of $\bfa:X\to A$ and $\phi:A\to\C$
gives $\phi\circ \bfa:X\to\C$, $i\mapsto \phi(a_i)$. With this convention,
equality \eqref{eqn:intro:Sp(bfa)-thm} is rephrased as follows;
\begin{equation}\label{eqn:intro:Sp(bfa)=phi o bfa}
  \Sp(\bfa)=\bigset{\phi\circ \bfa: \phi\in \mis(A)}.
\end{equation}

In general, let $X$ be a nonempty set, and let $f:X\to A$ be a function.
The \emph{vector-valued spectrum} of $f$ is defined to be
\begin{equation}\label{eqn:intro:Sp(f)-dfn}
  \vsp(f) = \Bigset{\lambda:X\to \C: \U\notin \sum_{x\in F} A\bigprn{\lambda(x)\U-f(x)}},
\end{equation}
where $F$ runs over finite subsets of $X$. To prevent any confusion and
to distinguish between different types of spectrum, vector-valued spectrum
is denoted $\vsp(f)$.

Form \eqref{eqn:intro:Sp(f)-dfn} a function $\lambda:X\to \C$ does not belong to $\vsp(f)$
if, and only if, there exist $x_1,\dotsc,x_n$ in $X$ and
$a_1,\dotsc,a_n$ in $A$ such that
\begin{equation}\label{eqn:intro:not-in-Sp(f)}
  \U=\sum_{i=1}^n a_i\bigprn{\lambda(x_i)\U-f(x_i)}.
\end{equation}

\noindent
Extending \eqref{eqn:intro:Sp(bfa)=phi o bfa},
later in Theorem \ref{thm:SP(f)=phi o f}, we establish the following equality;
\begin{equation}\label{eqn:intro:SP(f)=phi o f}
   \vsp(f)=\set{\phi\circ f : \phi \in \mis(A)}.
\end{equation}

It is then clear that if $f:X\to A$ is continuous, then $\vsp(f)\subset C(X)$.
In this case, we will see (Theorem \ref{thm:Sp(f) is compact in C(X)})
that $\vsp(f)$ is a compact subset of $C(X)$. In general, if $X$ is enriched
with some structure (topological, algebraical, etc.),
and $f:X\to A$ is an appropriate morphism, then
many structural properties of $f$ are inherited by every $\lambda\in \vsp(f)$.

\begin{proposition}[\cite{Dineen-Harte-Taylor-I}, \cite{Harte-gmj}]
  Let $f:X\to A$ be a function and $\lambda:X\to \C$ be in the vector-valued spectrum
  $\vsp(f)$.
  \begin{enumerate}[\quad$(1)$]
    \item If $f$ is bounded, then so is $\lambda$.

    \item If $X$ is a topological space and $f\in C(X,A)$, then $\lambda\in C(X)$.

    \item If $X\subset \C^n$ and $f\in H_0(X,A)$, i.e.\ $f$ is holomorphic on
    a neighbourhood of $X$, then $\lambda\in H_0(X)$.

    \item If $(X,d\,)$ is a metric space and $f\in\Lip(X,A)$, then $\lambda\in\Lip(X)$.

    \item If $X=\N$ and $f\in\ell^1(\N,A)$, then $\lambda\in\ell^1(\N)$.

    \item If $X$ is a linear space and $f$ is linear, then so is $\lambda$.

    \item If $X$ is a Banach space and $f\in\cB(X,A)$, then $\lambda \in X^*$ and $\|\lambda\| \leq \|f\|$.

    \item If $X=\fA$ is a Banach algebra and $f$ is an algebra homomorphism,
    then $\lambda\in\mis(\fA)$.

    \item If $I:A\to A$ is the identity operator, then $\vsp(I)=\mis(A)$.
  \end{enumerate}
\end{proposition}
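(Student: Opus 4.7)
The plan is to reduce all nine items to the single representation formula \eqref{eqn:intro:SP(f)=phi o f}: every element $\lambda$ of $\vsp(f)$ has the form $\phi\circ f$ for some $\phi\in\mis(A)$. Granting this identity (which is the content of the forthcoming Theorem \ref{thm:SP(f)=phi o f}), each assertion reduces to a short verification that exploits the fact that every character $\phi\in\mis(A)$ is a unital algebra homomorphism $A\to\C$ with $\norm{\phi}\le 1$. So the first move is simply to fix $\lambda\in\vsp(f)$ and write $\lambda=\phi\circ f$, after which the nine items split into three natural groups according to which feature of $\phi$ is being used.

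The metric items (1), (4), (5) and the norm estimate in (7) all follow from the pointwise bound $\abs{\phi(a)}\le\norm{a}$, which converts the sup-norm boundedness of $f$, its Lipschitz constant, its $\ell^1$-summability, or its operator norm, into the same quantitative statement about $\lambda$. The topological and analytic items (2) and (3) use only that $\phi$ is a continuous $\C$-linear functional, so post-composition with $\phi$ preserves continuity on $X$ and preserves holomorphy on each open subset of $\C^n$ on which $f$ is holomorphic. The algebraic items (6), the linearity half of (7), (8) and (9) use that $\phi$ is a unital algebra homomorphism: linearity survives composition; the composition of two unital algebra homomorphisms is a unital algebra homomorphism, so $\phi\circ f\in\mis(\fA)$ in (8); and the tautology $\phi\circ I=\phi$ yields $\vsp(I)=\mis(A)$ in (9).

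Accordingly, the only substantive piece of work is the representation formula \eqref{eqn:intro:SP(f)=phi o f} itself, and that is handled separately in the body of the paper. Once it is in hand, the proposition becomes, in effect, a dictionary translating $A$-valued regularity of $f$ into scalar regularity of each of its shadows $\phi\circ f$, with no individual case requiring a genuinely distinct argument beyond the three bullet-type observations above.
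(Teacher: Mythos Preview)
The paper does not supply its own proof of this proposition; it is quoted from \cite{Dineen-Harte-Taylor-I} and \cite{Harte-gmj} as background in the introduction, with no proof environment following it. Your approach---invoking the representation $\lambda=\phi\circ f$ from Theorem~\ref{thm:SP(f)=phi o f} and then reading off each item from the fact that every $\phi\in\mis(A)$ is a contractive unital algebra homomorphism---is correct and is indeed the most economical way to obtain all nine statements at once.

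The only point worth flagging is logical ordering. In the paper the proposition sits in Section~1, well before Theorem~\ref{thm:SP(f)=phi o f} is stated or proved in Section~\ref{sec:C(X)-valued Spectrum}, so your argument is a forward reference. This is harmless here because the proof of Theorem~\ref{thm:SP(f)=phi o f} is entirely self-contained (it uses only the definition \eqref{eqn:intro:Sp(f)-dfn} and the correspondence between maximal ideals and characters), so no circularity arises. If you wanted to avoid the forward reference altogether, each item can also be verified directly from the definition \eqref{eqn:intro:Sp(f)-dfn}, which is presumably closer to how the cited sources proceed; but once Theorem~\ref{thm:SP(f)=phi o f} is available your route is clearly preferable.
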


\subsection{The $A$-valued Spectrum}
We take a different approach to studying vector-valued spectrum.
To provide a motivation, assume $\fA$ is a complex function algebra on $X$.
For every $x\in X$, the evaluation homomorphism $\e_x:f\mapsto f(x)$
is a character of $\fA$ whence the spectrum $\Sp(f)$ contains the range $f(X)$.
In case $\fA$ is natural, i.e.\ $\e_x\ (x\in X)$ are the only
characters of $\fA$, we get $\Sp(f)=f(X)$.

Let $\A$ be an $A$-valued function algebra on $X$. For every $f\in\A$,
an $A$-valued spectrum $\vsp_A(f)$ of $f$ will be defined in such a way that
$f(X)\subset \vsp_A(f)$. Utilizing the $A$-valued characters \cite{Abtahi-vector-valued},
the following analogy of \eqref{eqn:intro:sp(a)=phi(a)}
will be established:
\begin{equation}\label{eqn:intro:SPA(f)=Psi(f)}
  \vsp_A(f) = \set{\Psi(f):\text{$\Psi$ is an $A$-character of $\A$}}.
\end{equation}

\subsection{Historical Background}

Many different spectra have been defined over the last seventy years.
The classical definition was given and developed in the 1950's by
Arens and Calder\'on \cite{Arens-Calderon}, Silov \cite{Shilov-1953}
and Waelbroeck \cite{Waelbroeck-1954} for $n$-tuples in a commutative
unital Banach algebra. Waelbroeck was
the first to consider the joint spectrum
of an infinite set of elements in a way that led to a functional calculus
for norm continuous holomorphic germs, \cite{Dineen-2004}.
Subsequently, his results were extended in various ways
by Matos \cite{Matos-1984, Matos-1988} and others.

When $A$ is a commutative Banach algebra and $\fA$ is a Banach space, Waelbroeck
defined an $\fA$-valued spectrum of an element $f$ of the projective
tensor product $\fA\ptp A$. In \cite{Matos-1984}, Matos considered the spectrum
as lying in $C(\mis(A),\fA)$, where $A$ is a uniform algebra with maximal ideal space
$\mis(A)$ and $\fA$ is a locally convex space with the approximation property.
In \cite{Matos-1988} he considered the spectrum when $\fA$ has an unconditional basis
and $A$ is an arbitrary commutative unital Banach algebra. In all these articles
an infinite-dimensional holomorphic functional calculus is developed.

In \cite{Harte-1973}, Harte defined, using left and right invertibility,
the joint spectrum of a system of elements in an arbitrary Banach algebra and,
subject to certain commutativity hypotheses, obtained spectral mapping theorems.
In \cite{Dineen-Harte-Taylor-I},
Dineen, Harte and Taylor defined a non-commutative
version of the Waelbroeck spectrum for tensor product elements in $\fA\ptp_\gamma A$,
where $\gamma$ is a uniform cross-norm, and by specialising to the case where $\fA$ was
itself a unital Banach algebra obtained a number of applications.
In \cite{Dineen-Harte-Taylor-II,Dineen-Harte-Taylor-III},
they continue this investigation and examine the behaviour
of the spectrum under polynomials and holomorphic mappings between Banach spaces.

\section{Preliminaries}

\subsection{Notations and Conventions}
  Throughout, $X$ is a compact Hausdorff space and $A$ is a commutative
  \emph{semisimple} Banach algebra with a unit element $\U$. The set of
  invertible elements of $A$ is denoted by $\Inv(A)$. The algebra of all
  continuous $A$-valued functions is denoted by $C(X,A)$. The uniform norm
  $\|f\|_X$ of a function $f\in C(X,A)$ is defined in the obvious way.

  If $f:X\to\C$ is a function and $a\in A$, we write $fa$ to denote the $A$-valued function
  $X\to A$, $x\mapsto f(x)a$. If $\fA$ is an algebra of complex-valued functions on $X$, we let $\fA A$ be
    the linear span of $\set{fa:f\in\fA,\,a\in A}$. Hence, an element $f\in \fA A$
    is of the form $f=f_1a_1+\dotsb+f_na_n$ with $f_j\in \fA$ and $a_j\in A$.

  Given an element $a\in A$, we use the same notation $a$ for the constant
  function $X\to A$ given by $a(x)=a$, for all $x\in X$, and consider $A$ as a closed
  subalgebra of $C(X,A)$. We identify $\C$ with the closed subalgebra $\C\U$ of $A$.
  Hence, every $\C$-valued function can be seen as an $A$-valued function.
  Given $f:X\to\C$, we use the same notation $f$ for the function
  $X\to A$, $x\mapsto f(x)\U$. We regard $C(X)$ as a closed subalgebra of $C(X,A)$.

  To every continuous function $f:X\to A$, we correspond the function
  \[
     \tilde f:\mis(A)\to C(X),\quad \tilde f(\phi)=\phi\circ f.
  \]

\subsection{Admissible Vector-valued Function Algebras}
An \emph{$A$-valued function algebra} on $X$ is a subalgebra $\A$ of $C(X,A)$ that
contains the constant functions $X\to A$, $x\mapsto a$,
with $a\in A$, and separates the points of $X$.
If $\A$ is endowed with some complete algebra norm $\enormm$ such that
the restriction of $\enormm$ to $A$ is equivalent to the original norm of $A$,
and $\|f\|_X \leq \normm{f}$, for every $f\in \A$, then $\A$ is called
a \emph{Banach $A$-valued function algebra} on $X$.
If the given norm is equivalent to the uniform norm $\enorm_X$,
$\A$ is called an \emph{$A$-valued uniform algebra}.
If no confusion can arise, instead of $\enormm$, we use the same notation $\enorm$ for the norm
of $\A$.

\begin{definition}[\cite{Abtahi-vector-valued}]
  An $A$-valued function algebra $\A$ is said to be \emph{admissible} if
  \begin{equation}\label{eqn:admissible-A-valued-FA}
     \set{(\phi\circ f)\U: \phi\in\mis(A), f\in \A} \subset \A.
  \end{equation}
\end{definition}

Admissible vector-valued function algebras are abundant. Some examples are
as follows. The algebra $C(X,A)$ of all continuous $A$-valued functions
is admissible. If $\fA$ is a complex function algebra on $X$
then $\fA A$ is an admissible $A$-valued function algebra on $X$, and thus
its uniform closure in $C(X,A)$ is an admissible $A$-valued uniform algebra.
More generally, the tensor product $\fA\tp A$ can be seen as an admissible
$A$-valued function algebra, and if $\gamma$ is a cross-norm on $\fA\tp A$,
its completion $\fA\ptp_\gamma A$ forms an admissible Banach $A$-valued function
algebra. 

Another example of an admissible Banach $A$-valued function algebra is
the $A$-valued Lipschitz function algebra $\Lip(X,A)$, where $X$
is a compact metric space (Example \ref{exa:LIP}).
An example in \cite{Abtahi-vector-valued}
shows that not all vector-valued function algebras are admissible.

During the paper, we let $\A$ be admissible and $\fA=\A \cap C(X)$, more precisely
$\fA=\A\cap C(X)\U$, be the subalgebra
of $\A$ consisting of all complex-valued functions in $\A$. Then $\fA$ forms
a complex-valued function algebra by itself and $\phi[\A]=\fA$, for all $\phi\in\mis(A)$.

\subsection{Vector-valued Characters}
\label{sec:vector-valued-characters}

Vector-valued characters are an obvious generalization of characters.
For every $x\in X$, define
$\cE_x:\A\to A$ by $\cE_x(f)=f(x)$. We call $\cE_x$ the \emph{evaluation homomorphism}
at the point $x$. Point evaluation homomorphisms are good examples of vector-valued characters.

\begin{definition}[\cite{Abtahi-vector-valued}]
\label{dfn:vector-valued-character}
  An \emph{$A$-character} of $\A$ is an algebra homomorphism
  $\Psi:\A\to A$ such that $\Psi(\U)=\U$ and $\phi(\Psi f)=\Psi(\phi\circ f)$,
  for all $f\in \A$ and $\phi\in\mis(A)$. The set of all $A$-characters of $\A$
  is denoted by $\mis_A(\A)$.
\end{definition}

If $\Psi:\A\to A$ is an $A$-character, then $\psi=\Psi|_\fA$ is a character of
$\fA$. If $\Psi_1$ and $\Psi_2$ are $A$-characters of $\A$ with $\psi_1=\Psi_1|_\fA$ and $\psi_2=\Psi_2|_\fA$,
then \cite{Abtahi-vector-valued}
\begin{equation}\label{eqn:Psi1=Psi2-iff}
   \Psi_1=\Psi_2 \Longleftrightarrow
   \ker\Psi_1=\ker\Psi_2 \Longleftrightarrow
   \ker\psi_1=\ker\psi_2 \Longleftrightarrow
   \psi_1=\psi_2.
\end{equation}

\begin{definition}[\cite{Abtahi-vector-valued}]
  Given a character $\psi\in\mis(\fA)$, if there exists an $A$-character $\Psi$ on $\A$
  such that $\Psi|_\fA=\psi$, then we say that $\psi$ \emph{lifts} to the $A$-character $\Psi$.
\end{definition}

By \eqref{eqn:Psi1=Psi2-iff}, if $\psi\in\mis(\fA)$ lifts to $\Psi_1$ and $\Psi_2$,
then $\Psi_1=\Psi_2$. For every $x\in X$, the unique $A$-character to which
the evaluation character $\e_x$ lifts is the evaluation homomorphism $\cE_x$.
Conditions under which every character $\psi$ lifts to
some $A$-character $\Psi$ are given in the following.

\begin{theorem}[\cite{Abtahi-vector-valued}]
\label{thm:every-psi-lifts-to-Psi-iff-every-f-extends-to-F}
  Let $E$ be the linear span of $\mis(A)$ in $A^*$.
  The following statements are equivalent.
  \begin{enumerate}[\upshape(i)]
    \item \label{item:g-is-continuous}
    For every $\psi\in\mis(\fA)$ and $f\in\A$, the mapping $g:E\to\C$, defined
    by $g(\phi)=\psi(\phi\circ f)$, is continuous with respect to the weak* topology of $E$.

    \item \label{item:every-psi-lifts-to-Psi}
    Every $\psi\in\mis(\fA)$ lifts to an $A$-character $\Psi\in \mis_A(\A)$.

    \item \label{item:every-f-extends-to-F}
    Every $f\in \A$ has a unique extension $F:\mis(\fA)\to A$ such that
    \[
      \phi(F(\psi)) = \psi(\phi\circ f) \quad (\psi\in \mis(\fA),\, \phi\in\mis(A)).
    \]
  \end{enumerate}
\end{theorem}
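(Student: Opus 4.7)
The plan is to decouple the equivalences into two halves: (ii)$\Leftrightarrow$(iii), which is essentially a rewriting identifying the extension $F_f(\psi)$ with the value $\Psi(f)$ of a lifting $A$-character, and (i)$\Leftrightarrow$(iii), where the substantive content lies and which I would attack through a duality argument reducing weak*-continuity of $g$ to representability by an element of $A$. Throughout, semisimplicity of $A$ will serve as a faithfulness statement for the Gelfand transform, allowing one to pass from pointwise identities on $\mis(A)$ to identities in $A$.

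For (ii)$\Rightarrow$(iii), given $\psi\in\mis(\fA)$ that lifts to some $\Psi\in\mis_A(\A)$, I would simply set $F(\psi)=\Psi(f)$ for each $f\in\A$. The defining relations give
\[
   \phi\bigprn{F(\psi)}=\phi(\Psi f)=\Psi(\phi\circ f)=\psi(\phi\circ f),
\]
the last equality because $\phi\circ f\in\fA$ (by admissibility) and $\Psi|_\fA=\psi$. Letting $\psi$ range over $\mis(\fA)$ yields the required extension $F$, with uniqueness forced by semisimplicity. Conversely, assuming (iii), define $\Psi\colon\A\to A$ by $\Psi(f)=F_f(\psi)$, where $F_f$ is the extension attached to $f$. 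Linearity and the normalization $\Psi(\U)=\U$ are immediate; for multiplicativity, since $\phi$ is a character one has $\phi\circ(fg)=(\phi\circ f)(\phi\circ g)$, and so $\phi(\Psi(fg))=\psi(\phi\circ f)\psi(\phi\circ g)=\phi(\Psi f\cdot\Psi g)$ for every $\phi\in\mis(A)$, which, together with semisimplicity, yields $\Psi(fg)=\Psi f\cdot\Psi g$. A similar argument produces the compatibility $\phi(\Psi f)=\Psi(\phi\circ f)$ and the restriction $\Psi|_\fA=\psi$.

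For (i)$\Leftrightarrow$(iii), first note that, by admissibility, $\phi\mapsto\phi\circ f$ sends $\mis(A)$ into $\fA$ and extends linearly to $E$, making $g(\phi)=\psi(\phi\circ f)$ a well-defined linear functional on $E$. Condition (iii) then says exactly that $g$ agrees on $\mis(A)$ (and hence on all of $E$) with the restriction $\widehat{F(\psi)}|_E$ of the Gelfand transform of some element of $A$, with $F(\psi)$ unique by semisimplicity. The crux is the representation lemma: a linear $L\colon E\to\C$ is weak*-continuous if and only if $L=\hat a|_E$ for some $a\in A$. The non-trivial direction is standard: weak*-continuity at the origin produces $a_1,\dots,a_n\in A$ and $C>0$ with $|L(\phi)|\leq C\max_i|\phi(a_i)|$, so $L$ vanishes on the common kernel of $\hat{a}_1|_E,\dots,\hat{a}_n|_E$ and hence factors through $E\to\C^n$, $\phi\mapsto(\phi(a_1),\dots,\phi(a_n))$; this produces scalars $c_i$ with $L=\sum_i c_i\hat{a}_i|_E=\hat a|_E$ for $a=\sum_i c_i a_i\in A$. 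The reverse implication is immediate from the definition of the weak* topology.

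The main obstacle is precisely this representation lemma for weak*-continuous functionals on a subspace of $A^*$; it is a mild strengthening of the familiar result for the whole dual and should present no real difficulty, but it is the only step that genuinely uses the ambient structure of $A$ rather than algebraic manipulations inside $\A$. Once it is in place, the remaining implications are structural bookkeeping, driven entirely by the admissibility condition and the faithfulness of the Gelfand transform.
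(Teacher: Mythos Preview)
The paper does not supply a proof of this theorem: it is quoted verbatim from \cite{Abtahi-vector-valued} and no argument appears in the present text, so there is nothing here to compare your proposal against line by line.

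That said, your outline is a correct proof. The equivalence (ii)$\Leftrightarrow$(iii) is exactly the bookkeeping you describe, with semisimplicity of $A$ supplying uniqueness and the passage from pointwise identities on $\mis(A)$ to identities in $A$. For (i)$\Leftrightarrow$(iii), your representation lemma---that a linear functional $L$ on $E$ is weak*-continuous precisely when $L=\hat a|_E$ for some $a\in A$---is the right pivot; the argument you give (continuity at $0$ produces finitely many $a_i$ dominating $L$, hence $L$ factors through $\phi\mapsto(\phi(a_1),\dots,\phi(a_n))$ and is therefore a linear combination of the $\hat a_i|_E$) is the standard one and works on the subspace $E$ just as on the full dual. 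The only point worth making explicit is that $\phi\mapsto\phi\circ f$ is linear in $\phi$, so the extension of $g$ from $\mis(A)$ to its linear span $E$ is unambiguous; you do mention this, but it is the place where a reader might pause.
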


We recall that $\fA=\A\cap C(X)\U$.

\begin{theorem}[\cite{Abtahi-vector-valued}]
\label{thm:main-thm-holds-for-UA}
  If $\|\hat f\|=\|f\|_X$, for all $f\in \fA$, then every character $\psi\in\mis(\fA)$
  lifts to some $A$-character $\Psi\in\mis_A(\A)$. In particular, if $\fA$ is a uniform algebra,
  then $\A$ satisfies all conditions
  in Theorem $\ref{thm:every-psi-lifts-to-Psi-iff-every-f-extends-to-F}$.
\end{theorem}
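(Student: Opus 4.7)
The plan is to verify condition~(iii) of Theorem~\ref{thm:every-psi-lifts-to-Psi-iff-every-f-extends-to-F}---namely, that every $f \in \A$ admits an extension $F \colon \mis(\fA) \to A$ with $\phi(F(\psi)) = \psi(\phi \circ f)$ for all $\psi \in \mis(\fA)$ and $\phi \in \mis(A)$---and then invoke the equivalence stated there. The first use of the hypothesis is to control each $\psi \in \mis(\fA)$ by the uniform norm on $X$: since the evaluations $\e_x$ ($x \in X$) sit inside $\mis(\fA)$ and $\hat g(\e_x) = g(x)$, one always has $\|\hat g\| \geq \|g\|_X$, and the hypothesis upgrades this to equality. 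Consequently $|\psi(g)| = |\hat g(\psi)| \leq \|\hat g\| = \|g\|_X$ for every $g \in \fA$, and together with $\psi(\U) = 1$ this says $\psi$, viewed on the subspace $\fA$ of $C(X)$ equipped with the sup norm, is a norm-one functional.

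I would then produce $F(\psi)$ through a representing measure. By Hahn--Banach, $\psi$ extends to a norm-one linear functional on $C(X)$, which the Riesz representation theorem writes as $g \mapsto \int_X g\, d\mu_\psi$ for some regular complex Borel measure $\mu_\psi$ on $X$ with $\|\mu_\psi\| = 1$. Set
\[
F(\psi) \;:=\; \int_X f(x)\, d\mu_\psi(x) \in A,
\]
a vector-valued integral that exists because $f \in C(X,A)$ and $X$ is compact. Moving any $\phi \in \mis(A)$ inside the integral, and using admissibility (so that $\phi \circ f \in \fA$) together with the representing property of $\mu_\psi$, gives
\[
\phi\bigl(F(\psi)\bigr) \;=\; \int_X \phi(f(x))\, d\mu_\psi(x) \;=\; \int_X (\phi \circ f)\, d\mu_\psi \;=\; \psi(\phi \circ f),
\]
which is precisely the relation demanded in condition~(iii).

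The step I expect to be the main obstacle is well-definedness: $\mu_\psi$ is not unique, so a priori $F(\psi)$ could depend on the choice. To handle this, suppose $\mu$ and $\nu$ both represent $\psi$ on $\fA$; then $\int g\, d(\mu - \nu) = 0$ for every $g \in \fA$. Applying this with $g = \phi \circ f$ for each $\phi \in \mis(A)$ shows that the elements $\int f\, d\mu$ and $\int f\, d\nu$ of $A$ have the same Gelfand transform on $\mis(A)$, and the standing assumption that $A$ is semisimple forces them to coincide. Uniqueness of the extension $F$ for each fixed $f$ follows for the same reason. With condition~(iii) verified, Theorem~\ref{thm:every-psi-lifts-to-Psi-iff-every-f-extends-to-F} delivers an $A$-character $\Psi \in \mis_A(\A)$ lifting $\psi$. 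The ``in particular'' clause is immediate, since for a uniform algebra one has $\|g\|_\fA = \|g\|_X$ and then the chain $\|g\|_X \leq \|\hat g\| \leq \|g\|_\fA = \|g\|_X$ pins the hypothesis down.
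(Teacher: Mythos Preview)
Your argument is correct. The key observation---that the hypothesis $\|\hat g\|=\|g\|_X$ forces each $\psi\in\mis(\fA)$ to be contractive with respect to the uniform norm on $X$---is exactly what unlocks the Hahn--Banach/Riesz route, and your well-definedness check via semisimplicity of $A$ is clean. The extension property $F(\e_x)=f(x)$, which you do not spell out, is automatic: specialising $\phi(F(\psi))=\psi(\phi\circ f)$ to $\psi=\e_x$ gives $\phi(F(\e_x))=\phi(f(x))$ for every $\phi\in\mis(A)$, and semisimplicity finishes it.

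As for comparison: the present paper does not prove this theorem at all. It is imported verbatim from the companion paper \cite{Abtahi-vector-valued}, with no argument reproduced here, so there is no ``paper's own proof'' to set yours against. Your representing-measure construction is the natural one in this setting and would serve as a self-contained substitute for the citation. One small stylistic remark: since $\psi(\U)=1$ and $\|\mu_\psi\|=1$, the measure $\mu_\psi$ is in fact a probability measure, which you could note to make the Bochner integral step even more transparent---but this is not needed for correctness.
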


\section{$C(X)$-valued Spectrum}
\label{sec:C(X)-valued Spectrum}

In this section, we study the vector-valued spectrum $\vsp(f)$ of an $A$-valued function
$f$ on $X$ defined by \eqref{eqn:intro:Sp(f)-dfn}

\begin{theorem}
\label{thm:SP(f)=phi o f}
  $\vsp(f)=\set{\phi\circ f : \phi \in \mis(A)}$.
\end{theorem}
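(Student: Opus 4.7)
\medskip
\noindent\textbf{Proof plan.} The strategy is to show the two inclusions separately. The forward inclusion $\set{\phi\circ f : \phi\in\mis(A)}\subset \vsp(f)$ is essentially a direct calculation: given $\phi\in\mis(A)$, set $\lambda=\phi\circ f$ and suppose, towards a contradiction, that $\lambda\notin\vsp(f)$. Then there exist $x_1,\dotsc,x_n\in X$ and $a_1,\dotsc,a_n\in A$ with $\U=\sum_{i=1}^n a_i(\lambda(x_i)\U-f(x_i))$, and applying the character $\phi$ to both sides gives $1=\sum_i \phi(a_i)(\lambda(x_i)-\phi(f(x_i)))=0$, since $\lambda(x_i)=\phi(f(x_i))$ by definition. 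This contradicts $\phi(\U)=1$.

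For the reverse inclusion $\vsp(f)\subset\set{\phi\circ f:\phi\in\mis(A)}$, the plan is to use maximal-ideal theory. Fix $\lambda\in \vsp(f)$ and consider the ideal
\[
   I=\sum_{x\in X} A\bigprn{\lambda(x)\U-f(x)}
\]
of $A$ generated by the family $\set{\lambda(x)\U-f(x):x\in X}$; its elements are precisely the finite sums appearing in \eqref{eqn:intro:not-in-Sp(f)}. The defining condition \eqref{eqn:intro:Sp(f)-dfn} says exactly that $\U\notin I$, so $I$ is a proper ideal of $A$.

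By Zorn's lemma, $I$ is contained in some maximal ideal $M$ of $A$. Since $A$ is a commutative unital Banach algebra, $M=\ker\phi$ for some $\phi\in\mis(A)$. For every $x\in X$ we have $\lambda(x)\U-f(x)\in I\subset\ker\phi$, so $\lambda(x)=\phi(f(x))$; that is, $\lambda=\phi\circ f$, as required.

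The proof is essentially mechanical once one recognises that the set on the right-hand side of \eqref{eqn:intro:Sp(f)-dfn} is an ideal; the only conceptual step is the passage from a proper ideal to a maximal ideal, which relies on the unit $\U$ being available and on the standard Gelfand correspondence between maximal ideals and characters in a commutative unital Banach algebra. I anticipate no real obstacle here, since the condition defining $\vsp(f)$ was tailor-made so that ``$\U\notin I$'' holds, and no continuity, boundedness, or structural hypotheses on $\lambda$ or $f$ are needed for the argument to go through.
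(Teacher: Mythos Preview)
Your proof is correct and follows essentially the same route as the paper's: both directions are handled identically, with the reverse inclusion obtained by observing that the ideal generated by $\set{\lambda(x)\U-f(x):x\in X}$ is proper and hence annihilated by some character $\phi\in\mis(A)$. The only cosmetic difference is that you spell out the Zorn's-lemma step and the Gelfand correspondence explicitly, whereas the paper simply asserts the existence of such a $\phi$.
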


\begin{proof}
  First, take $\phi\in \mis(A)$. We show that $\phi\circ f \in \vsp(f)$.
  If $\phi\circ f \notin \vsp(f)$, then, for some points $x_1, \dotsc, x_n\in X$
  and vectors $a_1,\dotsc, a_n \in A$, we have
  \[
    \U= \sum_{i=1}^n a_i(\phi\circ f(x_i)\U - f(x_i)).
  \]
  Therefore,
  \[
   1=\phi(\U)
    = \sum_{i=1}^n \phi(a_i)(\phi(f(x_i)) - \phi(f(x_i)))=0.
  \]
  This is a contradiction. Hence $\phi\circ f \in \vsp(f)$.

  Conversely, take $\lambda \in \vsp(f)$, and consider the ideal $I$ in $A$
  generated by
  \[
    S=\{\lambda(x)\U-f(x):x\in X\}.
  \]
  Since $\U\notin I$, the ideal $I$ is proper. So there is a character $\phi\in\mis(A)$
  with $I\subset \ker\phi$. This means that $\phi(\lambda(x)\U-f(x))=0$,  for all $x\in X$,
  whence $\lambda=\phi\circ f$.
\end{proof}

Next, we show that $\vsp(f)$, for $f\in C(X,A)$, is a compact subset of $C(X)$.
To this end, the following lemma is needed. Recall that a family $\cF$ of continuous
functions from $X$ into a metric space $(Y,d\,)$ is \emph{equicontinuous}
if, for every $x\in X$ and every $\e > 0$, there is a neighborhood
$U_x$ of $x\in X$ such that
\[
  d(f(x),f(y))<\e \quad (y\in U_x,\, f\in\cF).
\]

\begin{lemma}
\label{lem:pre-upper-semi-cnts-0}
  If a family $\cF$ of functions in $C(X,A)$ is equicontinuous,
  then the family $\cG$ defined by
  \begin{equation}\label{eqn:cG}
    \cG= \set{\phi\circ f: \phi\in \mis(A),\, f\in\cF}
  \end{equation}
  is equicontinuous.
\end{lemma}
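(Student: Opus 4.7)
The plan is to reduce equicontinuity of $\cG$ to equicontinuity of $\cF$ by exploiting the fact that every character $\phi\in\mis(A)$ is automatically continuous with operator norm at most $1$ (since $A$ is a unital commutative Banach algebra, we have $\abs{\phi(a)} \leq \norm{a}$ for every $a\in A$). This bound is uniform in $\phi$, which is exactly what makes the argument go through cleanly.

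Concretely, I would fix $x\in X$ and $\e>0$, and use the equicontinuity of $\cF$ to choose a neighborhood $U_x$ of $x$ such that
\[
  \norm{f(x)-f(y)} < \e \qquad (y\in U_x,\; f\in \cF).
\]
Then for any $\phi\in\mis(A)$ and any $f\in\cF$, linearity of $\phi$ together with the norm bound gives
\[
  \abs{(\phi\circ f)(x) - (\phi\circ f)(y)}
  = \abs{\phi\bigprn{f(x)-f(y)}}
  \leq \norm{f(x)-f(y)} < \e
\]
for all $y\in U_x$. Since this estimate is uniform over all $g=\phi\circ f\in\cG$, the same neighborhood $U_x$ witnesses equicontinuity of $\cG$ at $x$.

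There is essentially no obstacle here: the whole content of the lemma is the uniform bound $\norm{\phi}\leq 1$ on characters, and the proof is a one-line inequality. The only thing worth being careful about is to mention explicitly why $\phi$ is continuous with norm at most $1$ (standard fact for characters on unital commutative Banach algebras), since this is what makes the estimate independent of the choice of $\phi\in\mis(A)$.
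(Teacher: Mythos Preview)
Your proof is correct and is essentially identical to the paper's own argument: fix a point, use equicontinuity of $\cF$ to obtain a single neighborhood, and then invoke the uniform bound $\abs{\phi(a)}\leq\norm{a}$ for $\phi\in\mis(A)$ to transfer the estimate to every $\phi\circ f\in\cG$.
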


\begin{proof}
  Let $x_0\in X$ and $\e>0$. Since $\cF$ is equicontinuous, there is a neighbourhood
  $U_0$ of $x_0$ such that $\|f(x)-f(x_0)\|<\e$, for all $f\in \cF$ and $x\in U_0$.
  Then
  \[
    |\phi\circ f(x)- \phi\circ f(x_0)|\leq \|f(x)-f(x_0)\|<\e
    \quad (\phi\in \mis(A),\,f\in \cF,\,x\in U_0).
  \]
  Hence $\cG$ is equicontinuous.
\end{proof}

The Arzel\'a-Ascoli theorem states that a family $\cF \subset C(X)$ is relatively compact
if $\cF$ is equicontinuous and pointwise bounded.

\begin{theorem}\label{thm:Sp(f) is compact in C(X)}
  For every $f\in C(X,A)$, the spectrum $\vsp(f)$ is a compact subset of $C(X)$.
\end{theorem}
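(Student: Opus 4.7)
The plan is to apply the Arzel\'a-Ascoli theorem (quoted just above) together with the identification $\vsp(f)=\set{\phi\circ f:\phi\in\mis(A)}$ from Theorem \ref{thm:SP(f)=phi o f}. Equivalently, one may view $\vsp(f)$ as the image of $\mis(A)$ under the map $\Phi:\mis(A)\to C(X)$ given by $\Phi(\phi)=\phi\circ f$; either route works, but I will follow the Arzel\'a-Ascoli strategy for which the preceding lemma is clearly tailored.

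For equicontinuity, I would take $\cF=\set{f}$: a singleton consisting of a continuous $A$-valued function on the compact Hausdorff space $X$ is automatically equicontinuous (indeed, $f$ is uniformly continuous). Lemma \ref{lem:pre-upper-semi-cnts-0} then delivers that $\cG=\set{\phi\circ f:\phi\in\mis(A)}=\vsp(f)$ is equicontinuous. Pointwise boundedness is immediate from $\abs{\phi(f(x))}\leq\norm{\phi}\,\norm{f(x)}\leq\norm{f}_X$, since characters of a unital commutative Banach algebra are contractive. Hence $\vsp(f)$ is relatively compact in $C(X)$.

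To complete the argument, I would verify that $\vsp(f)$ is itself closed in $C(X)$. Let $\lambda$ be a limit point, approached uniformly by some net $\phi_\alpha\circ f$ with $\phi_\alpha\in\mis(A)$. Since $\mis(A)$ is weak* compact, after passing to a subnet we may assume $\phi_\alpha\to\phi$ weak* for some $\phi\in\mis(A)$. Evaluating at each fixed $x\in X$ gives $\phi_\alpha(f(x))\to\phi(f(x))$, so $\lambda=\phi\circ f\in\vsp(f)$.

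The only point requiring any care is the use of subnets rather than subsequences in the closedness step, since $\mis(A)$ need not be metrizable; there is no real obstacle. The proof is essentially a bookkeeping exercise that cashes in the preceding lemma together with the standard weak* compactness of $\mis(A)$.
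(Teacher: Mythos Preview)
Your proof is correct and follows the same Arzel\'a--Ascoli scaffolding as the paper: equicontinuity via Lemma \ref{lem:pre-upper-semi-cnts-0} with $\cF=\{f\}$, and boundedness via $\|\phi\|\leq 1$. The only substantive divergence is in the closedness step. The paper argues directly from the ideal-theoretic description \eqref{eqn:intro:not-in-Sp(f)}: if $\lambda\notin\vsp(f)$ one fixes a finite identity $\U=\sum a_i(\lambda(x_i)\U-f(x_i))$ and shows that no function within distance $\bigl(\sum\|a_i\|\bigr)^{-1}$ of $\lambda$ can lie in $\vsp(f)$, so the complement is open. You instead invoke weak* compactness of $\mis(A)$: given a uniform limit $\lambda$ of $\phi_\alpha\circ f$, pass to a weak*-convergent subnet $\phi_\beta\to\phi$ and compare the two pointwise limits at each $x$ to get $\lambda=\phi\circ f$. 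Your route is cleaner and sidesteps the paper's use of sequences in $C(X)$ (which, strictly speaking, needs $C(X)$ metrizable or a reformulation via nets); the paper's route has the virtue of being self-contained from the definition and not appealing to compactness of $\mis(A)$. Your parenthetical remark that one could simply show $\Phi:\mis(A)\to C(X)$ is continuous and take the image of a compact set is also valid, though you would then need to supply the continuity argument (which amounts to uniform convergence of $\phi_\alpha|_{f(X)}$ on the compact set $f(X)$).
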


\begin{proof}
  That $\vsp(f)\subset C(X)$ follows from Theorem \ref{thm:SP(f)=phi o f} and the assumption
  that $f:X\to A$ is continuous. To show that $\vsp(f)$ is compact in $C(X)$,
  using Arzel\'a-Ascoli theorem, we only need to show that $\vsp(f)$ is
  uniformly closed, uniformly bounded and equicontinuous.

  We prove that $\vsp(f)$ is uniformly closed in $C(X)$. Take a function
  $\lambda\in C(X)$ and assume that $\lambda\notin\vsp(f)$. Then, there exist $a_1,\dotsc,a_m \in A$ and
  $x_1, \dotsc, x_m \in X$ such that
  \begin{equation}\label{eqn:g notin SP(f)}
  \U=\sum_{i=1}^{m} a_i\bigprn{\lambda(x_{i})\U - f(x_{i})}.
  \end{equation}

  To get a contradiction, assume $\lambda\in \overline{\vsp(f)}$. Then, there is a sequence
  $\{\phi_n\}$ in $\mis(A)$ such that  $\phi_n\circ f\to \lambda$, uniformly on $X$.
  Take $\e=\prn{\|a_1\|+\dotsb+\|a_m\|}^{-1}$, where $a_1,\dotsc,a_m$ are
  given by \eqref{eqn:g notin SP(f)}.
  There is $N\in \N$, such that $\|\phi_n\circ f-\lambda\|_X <\e$, for all $n\geq N$. This implies that
  \[
    \abs{\phi_N(f(x_i))-\lambda(x_i)}< \e \quad (i=1,2,\dotsc,m).
  \]

  Now, by \eqref{eqn:g notin SP(f)}, we get
  \[
     1 = \phi_N(\U) = \sum_{i=1}^m \phi_N(a_i)\bigprn{\lambda(x_i)-\phi_N(f(x_i))}.
  \]
  Therefore,
  \[
    1 \leq \sum_{i=1}^m \bigabs{\phi_N(a_i)\bigprn{\lambda(x_i)-\phi_N(f(x_i))}}
      < \sum_{i=1}^m\|a_i\|\e = 1.
  \]
  This is a contradiction, and thus $\lambda\notin \overline{\vsp(f)}$.
  We conclude that $\vsp(f)$ is uniformly closed in $C(X)$.
  That $\vsp(f)$ is uniformly bounded follows from the fact that, for every $\phi\in\mis(A)$,
  \[
    \|\phi\circ f\|_X
    = \sup \set{|\phi(f(x))|:x\in X}
    \leq \|\phi\|\sup \set{\|f(x)\|:x\in X} = \|f\|_X.
  \]

  Finally, by taking $\cF=\set{f}$, Lemma \ref{lem:pre-upper-semi-cnts-0} implies that
  $\vsp(f)$ is equicontinuous.

  The set $\vsp(f)$ being uniformly closed, uniformly bounded and equicontinuous
  is a compact subset of $C(X)$.
\end{proof}

Now, analogous to \cite[Proposition 5.17]{BD}, we prove that the set-valued mapping
$f\mapsto \vsp(f)$ of $\A$ into the family of compact sets in $C(X)$
is upper semi-continuous.

\begin{lemma}
\label{lem:pre-upper-semi-cnts-1}
  Suppose $f_n\to f$ in $\A$, $\lambda_n\in\vsp(f_n)$ and $\lambda_n\to \lambda$ in $C(X)$.
  Then $\lambda\in \vsp(f)$.
\end{lemma}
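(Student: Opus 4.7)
The plan is to argue by contradiction, mimicking the strategy used to prove that $\vsp(f)$ is uniformly closed in Theorem \ref{thm:Sp(f) is compact in C(X)}, but now perturbing both the spectrum element and the function simultaneously. Specifically, suppose $\lambda \notin \vsp(f)$. Then by \eqref{eqn:intro:not-in-Sp(f)} there exist $x_1,\dotsc,x_m \in X$ and $a_1,\dotsc,a_m \in A$ such that
\[
  \U = \sum_{i=1}^m a_i\bigprn{\lambda(x_i)\U - f(x_i)}.
\]
The goal is to derive a similar representation of $\U$ involving $\lambda_n$ and $f_n$ for sufficiently large $n$, which would contradict the hypothesis $\lambda_n \in \vsp(f_n)$.

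The key device is to set
\[
  u_n = \sum_{i=1}^m a_i\bigprn{\lambda_n(x_i)\U - f_n(x_i)} \in A
\]
and show that $u_n \to \U$ in $A$. For this, I would use the following two ingredients: first, $\lambda_n \to \lambda$ uniformly on $X$, so $\lambda_n(x_i) \to \lambda(x_i)$ in $\C$ for each $i$; second, since $\A$ is a Banach $A$-valued function algebra, the estimate $\|f_n(x_i)-f(x_i)\| \leq \|f_n - f\|_X \leq \|f_n - f\|_\A$ together with $f_n \to f$ in $\A$ yields $f_n(x_i) \to f(x_i)$ in $A$. Since the sum is finite and the operations are continuous, $u_n \to \sum_{i=1}^m a_i(\lambda(x_i)\U - f(x_i)) = \U$.

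Now, since $\U \in \Inv(A)$ and $\Inv(A)$ is open in $A$, there exists $N$ such that $u_n \in \Inv(A)$ for all $n \geq N$. For such $n$, multiplying the defining identity for $u_n$ by $u_n^{-1}$ gives
\[
  \U = \sum_{i=1}^m (u_n^{-1} a_i)\bigprn{\lambda_n(x_i)\U - f_n(x_i)},
\]
so $\U$ lies in $\sum_{i=1}^m A(\lambda_n(x_i)\U - f_n(x_i))$. By \eqref{eqn:intro:Sp(f)-dfn}, this means $\lambda_n \notin \vsp(f_n)$, contradicting the hypothesis. Hence $\lambda \in \vsp(f)$.

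There is really no hard step here; the only thing one must be careful with is the passage from $f_n \to f$ in the algebra norm of $\A$ to convergence of the pointwise values $f_n(x_i)$ in $A$, which is where the defining inequality $\|\cdot\|_X \leq \normm{\cdot}$ of a Banach $A$-valued function algebra is used. After that, openness of $\Inv(A)$ delivers the contradiction immediately.
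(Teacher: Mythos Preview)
Your proof is correct, but it takes a genuinely different route from the paper's argument. The paper, after writing $\U=\sum_{i=1}^n a_i(\lambda(x_i)\U-f(x_i))$, invokes Theorem~\ref{thm:SP(f)=phi o f} to choose $\phi\in\mis(A)$ with $\lambda_N=\phi\circ f_N$, applies $\phi$ to the identity, and then estimates
\[
1=\Bigl|\sum_i \phi(a_i)\bigl(\lambda(x_i)-\phi\circ f(x_i)\bigr)\Bigr|
\leq \sum_i \|a_i\|\bigl(\|\lambda-\lambda_N\|_X+\|f_N-f\|_X\bigr)<1,
\]
obtaining the contradiction at the scalar level. You bypass the character description entirely: you form $u_n=\sum_i a_i(\lambda_n(x_i)\U-f_n(x_i))\in A$, show $u_n\to\U$, and use openness of $\Inv(A)$ to make $u_n$ invertible, which immediately places $\U$ in the ideal $\sum_i A(\lambda_n(x_i)\U-f_n(x_i))$. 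Your argument is slightly more elementary in that it works straight from the ideal-theoretic definition \eqref{eqn:intro:Sp(f)-dfn} and does not depend on Theorem~\ref{thm:SP(f)=phi o f}; the paper's approach, on the other hand, keeps the proof parallel to the closedness argument in Theorem~\ref{thm:Sp(f) is compact in C(X)} and emphasizes the character-theoretic viewpoint that drives the rest of the section.
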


\begin{proof}
  Towards a contradiction, assume $\lambda\notin \vsp(f)$. Then there exists a finite set
  of points $x_1,\dotsc,x_n$ in $X$ and vectors $a_1,\dotsc,a_n$ in $A$
  such that
  \begin{equation}\label{eqn:1=sum}
    \U=\sum_{i=1}^n a_i(\lambda(x_i)\U-f(x_i)).
  \end{equation}

  Take $\e=1/\bigprn{2\sum\|a_i\|}$. By the assumption, there is $N\in\N$
  such that
  \[
    \|f_N-f\|_X<\e, \quad \|\lambda_N-\lambda\|_X<\e.
  \]
  Since $\lambda_N\in \vsp(f_N)$, by Theorem \ref{thm:SP(f)=phi o f}, there is some
  $\phi\in \mis(A)$ such that $\lambda_N=\phi\circ f_N$. Using \eqref{eqn:1=sum}, we have
  \begin{align*}
    1 & = \Bigabs{\sum_{i=1}^n \phi(a_i)\bigprn{\lambda(x_i)-\phi\circ f(x_i)}} \\
    & \leq \sum_{i=1}^n \|a_i\|\bigabs{\lambda(x_i)-\lambda_N(x_i)+\phi\circ f_N(x_i)-\phi\circ f(x_i)} \\
    & \leq \sum_{i=1}^n \|a_i\|\bigprn{\|\lambda-\lambda_N\|_X+\|f_N-f\|_X}\\
    & < 2\e\sum_{i=1}^n \|a_i\| = 1.
  \end{align*}

  This is absurd. Hence $\lambda\in \vsp(f)$.
\end{proof}

\begin{theorem}
  The mapping $f\mapsto \vsp(f)$ of $\A$ into the family of compact sets
  in $C(X)$ is upper semi-continuous.
\end{theorem}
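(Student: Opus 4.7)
\noindent
My plan is to argue by contradiction, using Lemma~\ref{lem:pre-upper-semi-cnts-1} (the sequential closure property) together with an Arzelà--Ascoli compactness extraction.

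Suppose, for contradiction, that $f \mapsto \vsp(f)$ fails to be upper semi-continuous at some $f \in \A$. Then there is an open set $U \subset C(X)$ with $\vsp(f) \subset U$, together with a sequence $f_n \to f$ in $\A$ and functions $\lambda_n \in \vsp(f_n)$ such that $\lambda_n \notin U$ for every $n$. The strategy is to extract a subsequence of $\{\lambda_n\}$ converging uniformly to some $\lambda \in C(X)$; by Lemma~\ref{lem:pre-upper-semi-cnts-1} the limit $\lambda$ must lie in $\vsp(f)$, hence in $U$, while $\lambda_n \notin U$ for all $n$ and $U$ is open give the desired contradiction.

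To perform the extraction, I would check the hypotheses of Arzelà--Ascoli for $\{\lambda_n\} \subset C(X)$. By Theorem~\ref{thm:SP(f)=phi o f} we may write $\lambda_n = \phi_n \circ f_n$ for some $\phi_n \in \mis(A)$, and since characters of a commutative unital Banach algebra have norm one, $\|\lambda_n\|_X \leq \|f_n\|_X$; because $f_n \to f$ in $\A$ (hence uniformly on $X$), the sequence $\{\lambda_n\}$ is uniformly bounded. For equicontinuity, note first that the family $\{f_n\} \cup \{f\}$ is equicontinuous on $X$: $f$ is uniformly continuous on the compact space $X$, and a standard $3\varepsilon$-argument together with the uniform convergence $f_n \to f$ transfers equicontinuity from $f$ to all but finitely many $f_n$, while the finite exceptional family is handled by the individual uniform continuity of each $f_n$. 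Then Lemma~\ref{lem:pre-upper-semi-cnts-0} applied to $\cF=\{f_n\} \cup \{f\}$ shows that $\cG = \{\phi \circ g : \phi \in \mis(A),\, g \in \cF\}$ is equicontinuous, and $\{\lambda_n\} \subset \cG$.

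By Arzelà--Ascoli, after passing to a subsequence we have $\lambda_{n_k} \to \lambda$ uniformly on $X$ for some $\lambda \in C(X)$. Lemma~\ref{lem:pre-upper-semi-cnts-1}, applied to the subsequences $f_{n_k} \to f$ and $\lambda_{n_k} \to \lambda$ with $\lambda_{n_k} \in \vsp(f_{n_k})$, yields $\lambda \in \vsp(f) \subset U$. But $U$ is open and $\lambda_{n_k} \in C(X) \setminus U$, so the limit $\lambda$ also lies in the closed set $C(X) \setminus U$, a contradiction. I expect the main obstacle to be nothing deep; it is essentially the equicontinuity of $\{f_n\}$, which is a routine uniform-continuity/$3\varepsilon$ computation on the compact Hausdorff space $X$, but it does need to be stated explicitly before Lemma~\ref{lem:pre-upper-semi-cnts-0} can be invoked.
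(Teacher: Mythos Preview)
Your proof is correct and follows essentially the same route as the paper: argue by contradiction, use uniform convergence of $\{f_n\}$ to obtain equicontinuity of $\cF=\{f,f_1,f_2,\dots\}$, apply Lemma~\ref{lem:pre-upper-semi-cnts-0} to get equicontinuity (hence relative compactness) of the associated family $\cG$, and then combine Lemma~\ref{lem:pre-upper-semi-cnts-1} with a compactness extraction. The only cosmetic difference is that the paper packages your Arzel\`a--Ascoli subsequence step into a citation of \cite[Lemma~5.16]{BD}, whereas you carry out the extraction and the open/closed contradiction by hand.
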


\begin{proof}
  Suppose the mapping is not upper semi-continuous at some $f_0\in\A$. Hence, there is
  a neighbourhood $\Omega$ of $\vsp(f_0)$ in $C(X)$ and a sequence $\{f_n\}$ in
  $\A$ such that $f_n\to f_0$ and $\vsp(f_n)\not\subset \Omega$.
  Set $\cF=\set{f_0,f_1,f_2,\dotsc}$ and define $\cG$ as in \eqref{eqn:cG}.
  Since $f_n\to f_0$ uniformly on $X$, the family $\cF$ is equicontinuous
  on $X$. By Lemma \ref{lem:pre-upper-semi-cnts-0}, the family $\cG$ is equicontinuous.
  Since $\cF$ is uniformly bounded, $\cG$ is uniformly bounded, and, therefore, relatively compact.

  Now, consider the mapping $\vsp:\cF\to2^\cG$, $f\mapsto \vsp(f)$.
  By Lemma \ref{lem:pre-upper-semi-cnts-1} and \cite[Lemma 5.16]{BD},
  this mapping is upper semi-continuous at $f_0$, which contradicts our
  primary assumption.
\end{proof}

\subsection*{The $\fA$-valued spectrum}
  When $\A$ is an admissible $A$-valued function algebra on $X$
  and $\fA=\A\cap C(X)$, then $\vsp(f) \subset \fA$, for every $f\in \A$.
  In this case, it is convenient to denote the vector-valued spectrum
  by $\vsp_\fA(f)$ and call it the \emph{$\fA$-valued spectrum}.
  If the mapping $\tilde f:\mis(A)\to \fA$, $\phi\mapsto \phi\circ f$,
  is continuous (which is the case when $\fA$ is a uniform algebra) then $\vsp_\fA(f)$
  is compact in $\fA$ since $\vsp_\fA(f) = \tilde f(\mis(A))$. In general,
  however, it is unknown if $\tilde f$ is continuous or $\vsp_\fA(f)$
  is compact in $\fA$. We remark that $\vsp_\fA(f)$ is always closed in $\fA$.

\section{The $A$-valued Spectrum}

Let $\A$ be an admissible Banach $A$-valued function algebra on $X$.
Take $f\in \A$ and consider the function $\tilde f:\mis(A)\to \fA$,
$\phi\mapsto\phi\circ f$.
By Theorem \ref{thm:SP(f)=phi o f}, we have
\[
  \vsp(\tilde f) = \set{\psi\circ \tilde f: \psi\in \mis(\fA)}.
\]

Note that every $\lambda\in\vsp(\tilde f)$ is a function of $\mis(A)$ into $\C$.
Also, every $a\in A$ induces a function $\hat a:\mis(A)\to \C$,
the Gelfand transform of $a$.

\begin{definition}
  The \emph{$A$-valued spectrum} of a function $f\in \A$ is defined by
  \begin{equation}\label{eqn:SPA(f)}
    \vsp_A(f) = \set{a\in A : \hat a \in \vsp(\tilde f)}.
  \end{equation}
\end{definition}

\begin{theorem}\label{thm:SPA(f)=Psi(f)}
  For every $f\in \A$, we have
  \begin{equation}\label{eqn:f(X) subset Psi(f) subset SPA(f)}
    f(X) \subset \set{\Psi(f):\Psi\in\mis_A(\A)} \subset \vsp_A(f).
  \end{equation}
  If every character $\psi\in\mis(\fA)$ lifts to some $A$-character $\Psi\in \mis_A(\A)$,
  then
  \begin{equation}\label{eqn:SPA(f)=Psi(f)}
    \vsp_A(f)=\set{\Psi(f):\Psi\in\mis_A(\A)}.
  \end{equation}
\end{theorem}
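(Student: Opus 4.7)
The plan is to prove the three containments in order, using the characterization of $\vsp(\tilde f)$ from Theorem~\ref{thm:SP(f)=phi o f} combined with the admissibility of $\A$ and the semisimplicity of $A$.

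For the first inclusion $f(X)\subset\set{\Psi(f):\Psi\in\mis_A(\A)}$, I would simply note that, as observed in Section~\ref{sec:vector-valued-characters}, each evaluation homomorphism $\cE_x:\A\to A$ is an $A$-character and satisfies $\cE_x(f)=f(x)$.

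For the middle inclusion, I would fix $\Psi\in\mis_A(\A)$ and set $\psi=\Psi|_\fA\in\mis(\fA)$. Writing $a=\Psi(f)$, I want to verify $\hat a\in\vsp(\tilde f)$. The key computation uses the defining property of an $A$-character together with admissibility: for every $\phi\in\mis(A)$, the function $\phi\circ f$ lies in $\A\cap C(X)\U=\fA$ (by \eqref{eqn:admissible-A-valued-FA}), so
\[
\hat a(\phi)=\phi(\Psi(f))=\Psi(\phi\circ f)=\psi(\phi\circ f)=(\psi\circ\tilde f)(\phi).
\]
Thus $\hat a=\psi\circ\tilde f$, which belongs to $\vsp(\tilde f)$ by Theorem~\ref{thm:SP(f)=phi o f}, and therefore $a\in\vsp_A(f)$ by the definition \eqref{eqn:SPA(f)}. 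This establishes \eqref{eqn:f(X) subset Psi(f) subset SPA(f)}.

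For the reverse inclusion under the lifting hypothesis, I would take $a\in\vsp_A(f)$ so that $\hat a\in\vsp(\tilde f)$. By Theorem~\ref{thm:SP(f)=phi o f} applied to $\tilde f$, there is $\psi\in\mis(\fA)$ with $\hat a=\psi\circ\tilde f$. By hypothesis, $\psi$ lifts to an $A$-character $\Psi\in\mis_A(\A)$, and the computation above applied to $b=\Psi(f)$ yields $\hat b=\psi\circ\tilde f=\hat a$. The main (and only non-formal) point is now to pass from $\hat a=\hat b$ to $a=b$: this is precisely where the standing semisimplicity assumption on $A$ is used, via the injectivity of the Gelfand transform. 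We conclude $a=\Psi(f)$, giving \eqref{eqn:SPA(f)=Psi(f)}.

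There is no serious obstacle here; the argument is a bookkeeping exercise linking three layers ($A$-characters of $\A$, ordinary characters of $\fA$, and characters of $A$). The two subtleties to be careful about are (i) invoking admissibility to keep $\phi\circ f$ inside $\fA$ so that $\Psi$ and $\psi$ agree on it, and (ii) invoking semisimplicity at the very end to deduce an equality in $A$ from an equality of Gelfand transforms.
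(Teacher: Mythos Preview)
Your proof is correct and follows the same overall structure as the paper's. The first inclusion and the reverse inclusion under the lifting hypothesis are handled identically to the paper (the latter culminating in the same appeal to semisimplicity of $A$).

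The only difference is in the middle inclusion. You show directly that $\hat a=\psi\circ\tilde f$ with $\psi=\Psi|_\fA\in\mis(\fA)$, and then invoke the characterization $\vsp(\tilde f)=\{\psi\circ\tilde f:\psi\in\mis(\fA)\}$ (Theorem~\ref{thm:SP(f)=phi o f}) to conclude $\hat a\in\vsp(\tilde f)$. The paper instead goes back to the ideal-theoretic definition of $\vsp(\tilde f)$: for arbitrary $\phi_1,\dotsc,\phi_n\in\mis(A)$ and $g_1,\dotsc,g_n\in\fA$ it computes $\psi\bigl(\sum g_i(\hat a(\phi_i)\U-\phi_i\circ f)\bigr)=0$ to show that $\U$ is not in the relevant ideal. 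Your route is a bit more economical, since it uses Theorem~\ref{thm:SP(f)=phi o f} symmetrically for both inclusions rather than unpacking the definition for one of them; the paper's version has the minor advantage of being self-contained at that step. Either way, the key identity $\phi(\Psi f)=\psi(\phi\circ f)$, enabled by admissibility, is what drives the argument.
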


Equality \eqref{eqn:SPA(f)=Psi(f)} is an analogue
of \eqref{eqn:intro:sp(a)=phi(a)}.

\begin{proof}
  The first inclusion in \eqref{eqn:f(X) subset Psi(f) subset SPA(f)} is obvious
  since, for every $x\in X$, the evaluation homomorphism $\cE_x:f\mapsto f(x)$ is
  an $A$-character and $f(X) = \set{\cE_x(f):x\in X}$.

  To prove the second inclusion in \eqref{eqn:f(X) subset Psi(f) subset SPA(f)},
  take an $A$-character $\Psi\in\mis_A(\A)$ and let $a=\Psi(f)$.
  Take a finite set $\phi_1,\dotsc,\phi_n$ of elements in $\mis(A)$ and
  $g_1,\dotsc,g_n$ of functions in $\fA$, and consider the function
  \[
    g = \sum_{i=1}^n g_i(\hat a(\phi_i)\U-\phi_i\circ f).
  \]

  \noindent
  Let $\psi=\Psi|_\fA$. Then $\psi\in\mis(\fA)$ and thus $\psi(\U)=1$. We have
  \begin{align*}
    \psi(g)=\psi\biggprn{\sum_{i=1}^n g_i(\hat a(\phi_i)\U-\phi_i\circ f)}
     & = \sum_{i=1}^n \psi(g_i)(\hat a(\phi_i)-\psi(\phi_i\circ f)) \\
     & = \sum_{i=1}^n \psi(g_i)(\phi_i(a) - \phi_i(\Psi f)) \\
     & = \sum_{i=1}^n \psi(g_i)(\phi_i(a) - \phi_i(a)) = 0.
  \end{align*}
  This implies that $g\neq\U$. Hence $a\in\vsp_A(f)$, that is, $\Psi(f)\in\vsp_A(f)$.

  Now, assume that every character $\psi\in\mis(\fA)$ lifts to some $A$-character
  $\Psi\in \mis_A(\A)$. Take an element $a\in\vsp_A(f)$. We show the existence
  of some $A$-character $\Psi$ such that $a = \Psi(f)$. Since $a\in\vsp_A(f)$,
  we have $\hat a\in\vsp(\tilde f)$. Hence there is a character $\psi\in\mis(\fA)$
  such that $\hat a = \psi\circ \tilde f$. Assume $\psi$ lifts to the $A$-character
  $\Psi$ of $\A$. Then, for every $\phi\in\mis(A)$,
  \begin{equation*}
    \phi(a) = \hat a(\phi) = \psi(\tilde f(\phi))= \psi(\phi\circ f) = \phi(\Psi(f)).
  \end{equation*}

  Since $A$ is assumed to be semisimple, we get $a=\Psi(f)$.
\end{proof}

\begin{remark}
  If every character $\psi\in\mis(\fA)$ lifts to some $A$-character
  $\Psi\in \mis_A(\A)$, then by
  Theorem \ref{thm:every-psi-lifts-to-Psi-iff-every-f-extends-to-F}
  each $f\in\A$ extends to a function $F:\mis(\fA)\to A$.
  In this case, $\vsp_A(f)=F(\mis(\fA))$ which is compact if $F$ is continuous.
\end{remark}

\begin{corollary}
\label{cor:SP(f)=f(X)}
  If $\fA=C(X)\cap\A$ is a natural function algebra on $X$, then
  $\vsp_A(f)=f(X)$, for all $f\in \A$. In particular,
  $\vsp_A(f)$ is compact in $A$.
\end{corollary}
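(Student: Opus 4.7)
The plan is to reduce this corollary to the equality part of Theorem \ref{thm:SPA(f)=Psi(f)}, by verifying that every character of $\fA$ lifts to an $A$-character and then identifying the $A$-characters of $\A$ explicitly as the evaluation homomorphisms $\cE_x$.

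First, I would check the lifting hypothesis. By naturality, $\mis(\fA) = \set{\e_x : x\in X}$, and therefore
\[
  \|\hat f\| = \sup_{\psi\in\mis(\fA)}|\psi(f)| = \sup_{x\in X}|f(x)| = \|f\|_X
  \quad (f\in\fA).
\]
This is precisely the hypothesis of Theorem \ref{thm:main-thm-holds-for-UA}, which yields that every $\psi\in\mis(\fA)$ lifts to an $A$-character of $\A$. The equality in Theorem \ref{thm:SPA(f)=Psi(f)} therefore applies, giving $\vsp_A(f) = \set{\Psi(f) : \Psi\in\mis_A(\A)}$.

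Next, I would identify the $A$-characters. Each evaluation character $\e_x$ lifts to the evaluation homomorphism $\cE_x$ (recalled in Section \ref{sec:vector-valued-characters}), and by uniqueness of lifts in \eqref{eqn:Psi1=Psi2-iff}, these are the only lifts. Combining with the surjectivity of $\Psi\mapsto\Psi|_\fA$ onto $\mis(\fA)$ just established, it follows that $\mis_A(\A) = \set{\cE_x : x\in X}$. Hence
\[
  \vsp_A(f) = \set{\cE_x(f) : x\in X} = \set{f(x) : x\in X} = f(X).
\]
Since $X$ is compact Hausdorff and $f:X\to A$ is continuous, $f(X)$ is compact in $A$, giving the final assertion.

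I do not foresee a genuine obstacle: the only subtlety is recognizing that naturality of $\fA$ automatically supplies the Gelfand-norm-equals-sup-norm condition that triggers Theorem \ref{thm:main-thm-holds-for-UA}, so one need not work directly with the more abstract criteria of Theorem \ref{thm:every-psi-lifts-to-Psi-iff-every-f-extends-to-F}.
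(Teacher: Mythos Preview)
Your argument is correct and follows essentially the same route as the paper: both invoke the equality \eqref{eqn:SPA(f)=Psi(f)} of Theorem~\ref{thm:SPA(f)=Psi(f)} and then identify $\mis_A(\A)$ with $\{\cE_x:x\in X\}$ via naturality and the uniqueness in \eqref{eqn:Psi1=Psi2-iff}. The only difference is that your detour through Theorem~\ref{thm:main-thm-holds-for-UA} to obtain the lifting hypothesis is unnecessary: since naturality already says $\mis(\fA)=\{\e_x:x\in X\}$, and each $\e_x$ lifts to $\cE_x$ as noted in Section~\ref{sec:vector-valued-characters}, the lifting condition is immediate without appealing to the Gelfand-norm identity.
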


\begin{proof}
  Since $\fA$ is natural, the only characters of $\fA$ are the point
  evaluation homomorphisms $\e_x\ (x\in X)$. Therefore, $\cE_x\ (x\in X)$
  are the only $A$-characters of $\A$. From \eqref{eqn:SPA(f)=Psi(f)},
  we get $\vsp_A(f)=f(X)$ for all $f\in \A$.
\end{proof}

\section{Examples}

In this section, we identify the $A$-valued spectrum in certain
algebras.

\begin{example}\label{exa:SP(f)=f(X)}
  Let $\A=C(X,A)$. Then $\fA=C(X)$ is natural.
  By Corollary \ref{cor:SP(f)=f(X)},
  we have $\vsp_A(f)=f(X)$, for all $f\in C(X,A)$.
\end{example}

\begin{example}
\label{exa:LIP}
  Let $(X,\rho)$ be a compact metric space. An $A$-valued \emph{Lipschitz function}
  is a function $f : X \to A$ such that
  \begin{equation}
    L(f)=\sup\biggset{\frac{\norm{f(x)-f(y)}}{\rho(x,y)}:x,y\in X,\, x\neq y}<\infty.
  \end{equation}

  Denoted by $\Lip(X,A)$, the space of $A$-valued Lipschitz functions on $X$ is
  an $A$-valued function algebra on $X$, called the \emph{$A$-valued Lipschitz algebra}.
  For $f\in \Lip(X,A)$, the Lipschitz norm of $f$ is defined by
  $\norm{f}_L = \norm{f}_X + L(f)$. It is easily verified that $(\Lip(X,A),\enorm_L)$
  is an admissible Banach $A$-valued function algebra and $\Lip(X)=\Lip(X,A)\cap C(X)$,
  where $\Lip(X)=\Lip(X,\C)$ is the classic complex-valued Lipschitz algebra.
  It is proved in \cite{Sherbert-63} that
  $\Lip(X)$ is natural. Therefore, by Corollary \ref{cor:SP(f)=f(X)},
  we have $\vsp_A(f)=f(X)$, for all $f\in \Lip(X,A)$.
\end{example}

\begin{example}
  Let $K$ be a compact set in the complex plane.
  Let $P_0(K,A)$ be the algebra of the restriction to $K$
  of all polynomials $p(z)=a_nz^n+\dotsb+a_1z+a_0$ with coefficients
  $a_0,a_1,\dotsc,a_n$ in $A$. Let $R_0(K,A)$ be the algebra
  of the restriction to $K$ of all rational functions of the form
  $p(z)/q(z)$, where $p(z)$ and $q(z)$ are polynomials with coefficients
  in $A$, and $q(\lambda) \in \Inv(A)$, whenever $\lambda\in K$.

  The algebras $P_0(K,A)$ and $R_0(K,A)$ are $A$-valued function algebras
  on $K$, and their uniform closures in $C(K,A)$, denoted by $P(K,A)$ and $R(K,A)$,
  are $A$-valued uniform algebras. When $A=\C$, we drop $A$ and write $P(K)$ and $R(K)$,
  which are complex uniform algebras.

  For $P(K,A)$, we have $\fA=P(K)$ and it is known
  that the character space of $P(K)$ is naturally identified with $\hat K$
  the polynomially convex hull of $K$ (\cite{Gamelin-UA}). Since $\fA$ is
  a uniform algebra, by
  Theorems \ref{thm:every-psi-lifts-to-Psi-iff-every-f-extends-to-F} and
  \ref{thm:main-thm-holds-for-UA}, every $f\in P(K,A)$ extends to a function
  $F:\hat K\to A$. We see that, for all $f\in P(K,A)$,
  \[
    \vsp_A(f)=\set{\Psi(f):\Psi\in\mis_A(\A)}
    =\set{F(\psi):\psi\in\mis(\fA)}=F(\hat K)
  \]

  For $R(K,A)$, we have $\fA=R(K)$ and it is known
  that $R(K)$ is natural (\cite{Gamelin-UA}). By Corollary \ref{cor:SP(f)=f(X)},
  we have
  \[
    \vsp_A(f) = f(K) \quad (f\in R(K,A)).
  \]
\end{example}

The final example shows that the notion of $A$-valued spectrum coincides with
the notion of Waelbroeck spectrum \cite[Definition 6]{Dineen-Harte-Taylor-I}.

\begin{example}[Tensor Products]
\label{exa:Tensor Products}
For a Banach function algebra $\fA$ on $X$, consider
the algebraic tensor product $\fA\tp A$. By \cite[Theorem 42.6]{BD},
there is a linear mapping $T:\fA\tp A\to \fA A$ such that
\begin{equation}\label{eqn:T:fA tp A to C(X,A)}
  T \Bigprn{\sum_{i=1}^n f_i \tp a_i} = \sum_{i=1}^n f_i a_i.
\end{equation}

The mapping $T$ is, in fact, an algebra isomorphism so that 
the tensor product $\fA\tp A$ can be seen as an admissible $A$-valued
function algebra on $X$. The mapping $T$ in \eqref{eqn:T:fA tp A to C(X,A)}
extends to an isometric isomorphism of the injective tensor product $\fA\ptp_\epsilon A$
onto the uniform closure $\overline{\fA A}$ of $\fA A$ (see \cite{Abtahi-vector-valued}).
We identify every $f\in \fA\tp A$ with its image $Tf$ given by \eqref{eqn:T:fA tp A to C(X,A)}.
If $\enorm_\epsilon$ denote the injective tensor norm, then
$\|f\|_\epsilon = \|f\|_X$, for all $f\in \fA\tp A$.

In general, let $\enorm_\gamma$ be an algebra cross-norm on $\fA\tp A$,
and let $\fA\ptp_\gamma A$ be its completion.

\begin{lemma}
\label{lem:fA tp A is a FA}
  $\fA\ptp_\gamma A$ is a Banach $A$-valued function algebra on $X$.
\end{lemma}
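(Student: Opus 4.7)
The plan is to verify, for the completion $\fA\ptp_\gamma A$, each of the defining axioms of a Banach $A$-valued function algebra on $X$. Since $\enorm_\gamma$ is assumed to be an algebra cross-norm, $\fA\ptp_\gamma A$ is automatically a unital commutative Banach algebra (with unit $\U_\fA\tp \U_A$). The substantive task is therefore to realize this completion as a subalgebra of $C(X,A)$ satisfying the remaining conditions, which amounts to extending the algebraic isomorphism $T : \fA \tp A \to \fA A$ of \eqref{eqn:T:fA tp A to C(X,A)} to a continuous, injective unital algebra homomorphism $\tilde T : \fA\ptp_\gamma A \to C(X,A)$ with $\|\tilde T(u)\|_X \leq \|u\|_\gamma$.

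The central step is the estimate $\|Tu\|_X \leq \|u\|_\gamma$ on the dense subspace $\fA\tp A$. Fixing $x\in X$, I would observe that $(Tu)(x) = (\e_x\tp\mathrm{id}_A)(u)$, where $\e_x \in \mis(\fA)$ is the point-evaluation character, and that for each $\psi\in\mis(A)$ the product functional $\e_x\tp\psi$ is a unital algebra homomorphism on $\fA\tp A$. A Neumann-series argument in the Banach algebra $\fA\ptp_\gamma A$ --- using that $\U - w$ is invertible whenever $\|w\|_\gamma<1$ --- forces every such character to be continuous of norm at most $1$, which in turn yields the comparison $\enorm_\gamma \geq \enorm_\epsilon$ on $\fA\tp A$. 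Combined with the isometric identification $\fA\ptp_\epsilon A \cong \overline{\fA A}\subset C(X,A)$ stated in the paragraph just above the lemma, this gives $\|Tu\|_X = \|u\|_\epsilon \leq \|u\|_\gamma$. The map $T$ then extends by density to a continuous algebra homomorphism $\tilde T$, whose injectivity follows from the same factorisation through $\fA\ptp_\epsilon A$.

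Once $\fA\ptp_\gamma A$ is identified as a subalgebra of $C(X,A)$, the remaining axioms are easy to check. The constant function $x\mapsto a$ is the image of $\U_\fA\tp a\in\fA\tp A$; the algebra $\fA\ptp_\gamma A$ separates the points of $X$ because $\fA$ does, as witnessed by the elements $f\tp\U_A$ for an appropriate $f\in\fA$; and the restriction of $\enorm_\gamma$ to $A\cong \U_\fA\tp A$ coincides with $\enorm_A$, via the cross-norm identity $\|\U_\fA\tp a\|_\gamma = \|\U_\fA\|_\fA\|a\|_A = \|a\|_A$ (so in particular the two norms are equivalent, as required).

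The main obstacle is the estimate $\|Tu\|_X \leq \|u\|_\gamma$, which rests on $\enorm_\gamma \geq \enorm_\epsilon$. Care is needed here: the characters $\e_x\tp\psi$ are a priori defined only on $\fA\tp A$, and their continuity with respect to $\enorm_\gamma$ must be established without presuming their extension to $\fA\ptp_\gamma A$. Working directly with invertibility of $\U - w$ in the Banach algebra completion (rather than inside $\fA\tp A$) is the cleanest way to break the potential circularity and secure the required norm comparison.
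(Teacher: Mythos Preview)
Your overall plan matches the paper's: both hinge on the inequality $\|Tu\|_X=\|u\|_\epsilon\leq\|u\|_\gamma$ for $u\in\fA\tp A$, after which extending $T$ by density yields the realisation inside $C(X,A)$. The paper simply invokes this inequality as a known property of cross-norms and stops there; your write-up is in fact more complete, since you also check injectivity, the presence of constants, separation of points, and the norm equivalence on $A$, none of which the paper spells out.

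The gap is in your proposed justification of $\enorm_\epsilon\leq\enorm_\gamma$. The Neumann-series argument only controls \emph{multiplicative} functionals: it shows $|(\e_x\tp\psi)(u)|\leq\|u\|_\gamma$ for $\psi\in\mis(A)$, hence at best $\sup_x r\bigl((Tu)(x)\bigr)\leq\|u\|_\gamma$. But $\|u\|_\epsilon$ is the supremum over \emph{all} $\phi\tp\psi$ with $\phi\in\fA^*_1$, $\psi\in A^*_1$, and $\|Tu\|_X=\sup_x\|(Tu)(x)\|_A$ involves the full norm of $A$, not the spectral radius; neither is recovered from bounds on characters alone. Moreover, the circularity you flag is not actually resolved by passing to the completion: if $\chi=\e_x\tp\psi$ is defined only on $\fA\tp A$ and $\|w\|_\gamma<1$ with $\chi(w)=1$, then $(\U-w)v_n\to\U$ for $v_n\in\fA\tp A$ approximating $(\U-w)^{-1}$, yet $\chi((\U-w)v_n)=0$ for every $n$, and you cannot pass to the limit without already knowing $\chi$ is $\gamma$-continuous. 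The clean fix is to do what the paper does implicitly: take $\enorm_\epsilon\leq\enorm_\gamma$ as the standard fact that the injective norm is least among reasonable (uniform) cross-norms, which is the intended hypothesis here, and drop the Neumann-series detour entirely.
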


\begin{proof}
 We need to show that every $f\in \fA\ptp_\gamma A$ is a continuous function
 of $X$ into $A$ and $\|f\|_X \leq \|f\|_\gamma$. Given $\e>0$, the element
 $f$ has a representation of the form $f= \sum_{n=1}^\infty f_n$,
 with $f_n \in \fA\tp A$, and
 \[
   \|f\|_\gamma \leq \sum_{n=1}^\infty \|f_n\|_\gamma <\|f\|_\gamma+\e.
 \]

 Since $\enorm_\gamma$ is a cross-norm, we have
 $\|f_n\|_X=\|f_n\|_\epsilon \leq \|f_n\|_\gamma$, for all $n\geq1$, and
 thus the series $\sum f_n$ converges uniformly, whence
 $f$ is a continuous function of $X$ into $A$ and $\|f\|_X\leq \|f\|_\gamma$.
\end{proof}

\begin{lemma}
\label{lem:fA ptp A is admissible}
 If $f\in \fA \ptp_\gamma A$ and $\phi\in A^*$ then $\phi\circ f\in \fA$
 and $\|\phi\circ f\| \leq \|\phi\|\|f\|_\gamma$.
 In particular, the algebra $\fA \ptp_\gamma A$ is admissible.
\end{lemma}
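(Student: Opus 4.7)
The plan is to establish the estimate first on the algebraic tensor product $\fA\tp A$ and then extend it to the completion $\fA\ptp_\gamma A$ by continuity; admissibility will then fall out as a corollary.

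For $f=\sum_{i=1}^n f_i\tp a_i\in\fA\tp A$, the identification \eqref{eqn:T:fA tp A to C(X,A)} gives $\phi\circ f(x)=\sum_{i=1}^n\phi(a_i)f_i(x)$, so that $\phi\circ f=\sum_{i=1}^n\phi(a_i)f_i$ is manifestly in $\fA$. For the norm bound the naive triangle inequality only yields $\|\phi\circ f\|\leq\|\phi\|\|f\|_\pi$ in terms of the projective tensor norm, which is too weak since $\enorm_\gamma$ can be strictly smaller than $\enorm_\pi$. I would instead invoke the duality characterization of the injective norm: for any $\psi\in\fA^*$ with $\|\psi\|\leq 1$,
\[
   \psi(\phi\circ f)=\sum_{i=1}^n\psi(f_i)\phi(a_i)=(\psi\tp\phi)(f),
\]
which, by definition of $\enorm_\epsilon$, is bounded by $\|\psi\|\|\phi\|\|f\|_\epsilon\leq\|\phi\|\|f\|_\gamma$. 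The last step uses that a cross-norm dominates the injective norm, the same inequality already exploited in the proof of Lemma \ref{lem:fA tp A is a FA}. Taking the supremum over $\psi$ in the unit ball of $\fA^*$ yields $\|\phi\circ f\|_\fA\leq\|\phi\|\|f\|_\gamma$.

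To pass to a general $f\in\fA\ptp_\gamma A$, pick $f_n\in\fA\tp A$ with $\|f_n-f\|_\gamma\to 0$. The bound just established shows $(\phi\circ f_n)$ is Cauchy in $\fA$ with limit some $g\in\fA$. On the other hand, Lemma \ref{lem:fA tp A is a FA} says $\enorm_\gamma$ dominates $\enorm_X$ on $\fA\ptp_\gamma A$, so $f_n\to f$ uniformly, hence $\phi\circ f_n\to\phi\circ f$ uniformly; because the $\fA$-norm dominates the uniform norm, the two limits $g$ and $\phi\circ f$ must agree as elements of $C(X)$. Letting $n\to\infty$ in $\|\phi\circ f_n\|_\fA\leq\|\phi\|\|f_n\|_\gamma$ delivers the desired inequality, and shows $\phi\circ f\in\fA$.

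Admissibility then follows at once: for $\phi\in\mis(A)\subset A^*$ the above yields $\phi\circ f\in\fA$, hence $(\phi\circ f)\U\in\fA\U\subset\fA\ptp_\gamma A$, which is exactly the condition \eqref{eqn:admissible-A-valued-FA}. The only delicate point in the whole argument is the norm bound on $\fA\tp A$, where one must avoid the projective-norm trap and instead use the duality defining $\enorm_\epsilon$ together with the standing hypothesis that $\enorm_\gamma$ is a reasonable cross-norm.
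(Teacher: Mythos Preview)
Your proof is correct and follows essentially the same route as the paper: obtain $\|\phi\circ f\|\leq\|\phi\|\|f\|_\epsilon\leq\|\phi\|\|f\|_\gamma$ on the algebraic tensor product by dualizing against the unit ball of $\fA^*$ and recognizing the injective norm, then extend to the completion via a Cauchy-sequence argument identifying the $\fA$-limit with the uniform (pointwise) limit $\phi\circ f$. The only cosmetic difference is that the paper first normalizes to $\|\phi\|=1$ and writes out the double supremum over $\psi\in\fA^*_1$, $\vp\in A^*_1$ to exhibit $\|f\|_\epsilon$, whereas you phrase the same estimate via the pairing $(\psi\tp\phi)(f)$.
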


\begin{proof}
  First, assume $f=\sum_{i=1}^n f_i\tp a_i\in A\tp\fA$.
  Then, for every $\phi\in A^*$,
  \[
    \phi\circ f= \sum_{i=1}^n \phi(a_i) f_i \in \fA.
  \]

  \noindent
  Let $\fA^*_1$ and $A^*_1$ denote the closed unit ball of $\fA^*$ and
  $A^*$, respectively. In case $\|\phi\|=1$, we have
  \begin{equation}
  \begin{split}
    \|\phi\circ f\|
     & = \sup\set{|\psi(\phi\circ f)|:\psi\in \fA^*_1} \\
     & \leq \sup\set{|\psi(\vp\circ f)|:\psi\in \fA^*_1,\, \vp\in A^*_1} \\
     & = \sup\Bigset{\Bigabs{\sum_{i=1}^n \psi(f_i)\vp(a_i)}:\psi\in \fA^*_1,\, \vp\in A^*_1} \\
     & = \|f\|_\epsilon \leq \|f\|_\gamma.
  \end{split}
  \end{equation}
  In general case, we have
  $\|\phi\circ f\| \leq \|\phi\|\|f\|_\gamma$, for all $\phi\in A^*$ and $f\in \fA\tp A$.

  Next, consider $f\in \fA\ptp_\gamma A$. There is a sequence $\{f_n\}$ in $\fA\tp A$
  that converges to $f$ with respect to $\enorm_\gamma$. Since
  $\|\phi\circ f_n-\phi\circ f_m\| \leq \|\phi\|\|f_n-f_m\|_\gamma$, for all $m,n$,
  the sequence $\{\phi\circ f_n\}$ is Cauchy in $\fA$ whence it converges to
  some $h\in \fA$. Since $\|f_n-f\|_X\leq \|f_n-f\|_\gamma$,
  we have $f_n(x)\to f(x)$, for all $x\in X$, and thus
  \[
    h(x)= \lim_{n\to\infty} \phi(f_n(x)) = \phi(f(x)) = \phi\circ f(x).
  \]

  \noindent
  We see that $h=\phi\circ f\in \fA$ and
  \[
    \|\phi\circ f\| = \lim_{n\to\infty}\|\phi\circ f_n\|
    \leq \|\phi\|\lim_{n\to\infty}\|f_n\|_\gamma = \|\phi\|\|f\|_\gamma.
  \qedhere
  \]
\end{proof}

\begin{lemma}
  Let $f\in \fA \ptp_\gamma A$ and $\psi\in \mis(\fA)$.
  \begin{enumerate}[\quad$(1)$]
    \item \label{item:one}
    With respect to the weak* topology of $A^*$, the mapping 
    $\tilde f:A^* \to \fA$ given by $\tilde f(\phi)=\phi\circ f$, is continuous 
    on bounded subsets of $A^*$. 
    
    \item \label{item:two}
    The mapping $g:A^*\to \C$, $\phi\mapsto\psi(\phi\circ f)$ is continuous.
    
    \item \label{item:three}
    The character $\psi$ lifts to some $A$-character $\Psi\in\mis_A(\A)$.
  \end{enumerate}

\end{lemma}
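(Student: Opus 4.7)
The plan is to prove the three claims in order, each feeding into the next, and to close with an application of Theorem \ref{thm:every-psi-lifts-to-Psi-iff-every-f-extends-to-F}.

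For (1), I would first establish continuity on the dense subspace $\fA\tp A$: if $f=\sum_{i=1}^{n} f_{i}\tp a_{i}$, then $\tilde f(\phi)=\sum_{i=1}^{n} \phi(a_{i})\,f_{i}$, and for any weak*-convergent net $\phi_{\alpha}\to\phi$ the scalars $\phi_{\alpha}(a_{i})$ converge to $\phi(a_{i})$, so this finite sum converges in the norm of $\fA$. For a general $f\in\fA\ptp_{\gamma}A$, I would choose $f_{m}\in\fA\tp A$ with $\|f-f_{m}\|_{\gamma}\to 0$ and appeal to the previous lemma's estimate $\|\phi\circ f-\phi\circ f_{m}\|\leq \|\phi\|\,\|f-f_{m}\|_{\gamma}$. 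On a bounded subset $B\subset A^{*}$ with $\sup_{\phi\in B}\|\phi\|\leq M$, fixing $\phi_{0}\in B$ and running a standard three-$\e$ argument reduces weak* continuity of $\tilde f$ at $\phi_{0}$ to weak* continuity of $\widetilde{f_{m}}$ at $\phi_{0}$, which is the simple-tensor case already handled.

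Item (2) is then immediate: $g=\psi\circ\tilde f$, where $\psi\in\fA^{*}$ is norm continuous and $\tilde f$ is weak*-continuous on bounded subsets of $A^{*}$ by (1), so $g$ inherits weak*-continuity on bounded subsets of $A^{*}$.

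For (3), the plan is to upgrade bounded-subset continuity to full weak*-continuity on $A^{*}$ and then invoke Theorem \ref{thm:every-psi-lifts-to-Psi-iff-every-f-extends-to-F}. The functional $g\colon A^{*}\to\C$ is linear because $\phi\mapsto\phi\circ f$ is linear and $\psi$ is linear. By (2), $g$ is weak*-continuous on every closed ball $B_{r}\subset A^{*}$, so $\ker g\cap B_{r}$ is weak*-closed in $B_{r}$, and hence in $A^{*}$ since $B_{r}$ is weak*-compact. By the Krein--\v{S}mulian theorem, $\ker g$ is weak*-closed in $A^{*}$, so $g$ itself is weak*-continuous on $A^{*}$, and its restriction to the subspace $E$ is continuous in the inherited weak* topology. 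Since this holds for the fixed $\psi$ and every $f\in\A$, condition (i) of Theorem \ref{thm:every-psi-lifts-to-Psi-iff-every-f-extends-to-F} is satisfied and the desired lift $\Psi\in\mis_{A}(\A)$ follows. The main obstacle I expect is precisely this passage from bounded-subset continuity to global weak*-continuity; the Krein--\v{S}mulian step is the critical ingredient. As an alternative route, one may directly use the resulting weak*-continuity of $g$ on $A^{*}$ to produce a unique $a_{f}\in A$ with $\phi(a_{f})=\psi(\phi\circ f)$ for all $\phi\in A^{*}$, set $\Psi(f)=a_{f}$, and verify the unital homomorphism property and the identity $\phi(\Psi f)=\Psi(\phi\circ f)$ by appealing to the semisimplicity of $A$.
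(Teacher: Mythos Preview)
Your proposal is correct and follows essentially the same route as the paper: the three-$\varepsilon$ density argument for (1), composition with $\psi$ for (2), and Theorem~\ref{thm:every-psi-lifts-to-Psi-iff-every-f-extends-to-F} for (3). The only difference is organizational: the paper reads statement (2) as asserting \emph{global} weak* continuity of $g$ on $A^*$ and obtains it there by citing Corollary~3.11.4 of Horv\'ath (a Banach--Dieudonn\'e/Krein--\v{S}mulian type result), whereas you establish only bounded-subset continuity in (2) and defer the Krein--\v{S}mulian upgrade to (3). So your paragraph for (2) as written proves slightly less than what (2) claims, though the missing step is exactly the one you supply immediately afterward; moving the Krein--\v{S}mulian argument into (2) would align your write-up with the statement and with the paper.
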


\begin{proof}
  \eqref{item:one} 
  First, assume $f=\sum_{i=1}^n f_i\tp a_i\in \fA\tp A$.
  Given $\phi_0 \in A^*$ and $\e>0$, define a neighborhood $U_0$ of $\phi_0$
  as follows:
  \[
    U_0=\Bigset{\phi \in A^*: |\phi(a_i)-\phi_0(a_i)|
      < \frac\e{\sum_{i=1}^n\|f_i\|}, 1\leq i\leq n}.
  \]

  \noindent
  Then, for each $\phi \in U_0$,
  \[
    \|\tilde f(\phi)-\tilde f(\phi_0)\|
      = \|\phi\circ f - \phi_0\circ f\|
       \leq \sum_{i=1}^n \|f_i\||\phi(a_i)-\phi_0(a_i)| < \e.
  \]
  We see that, in this case, $\tilde f$ is continuous on $A^*$.

  In general case where $f\in \fA\ptp_\gamma A$, take, for every $\e>0$, 
  an element $f_0\in \fA\tp A$
  such that $\|f - f_0\|_\gamma < \e$. Assume $\{\phi_\alpha\}$ is a bounded net in $A^*$
  that converges to $\phi_0$, in the weak* topology. Suppose $\|\phi_\alpha\| \leq M$,
  for all $\alpha$. Since $\tilde f_0:A^*\to \fA$
  is continuous, there is $\alpha_0$ such that $\|\tilde f_0(\phi_\alpha)-\tilde f_0(\phi_0)\|<\e$,
  for $\alpha\geq \alpha_0$. Now, for $\alpha\geq \alpha_0$, we have
  \begin{align*}
    \|\tilde f(\phi_\alpha)-\tilde f(\phi_0)\|
      & \leq \|\phi_\alpha\circ f - \phi_\alpha\circ f_0\|
           + \|\phi_\alpha\circ f_0 - \phi_0\circ f_0\|+\|\phi_0\circ f_0 - \phi_0\circ f\|\\
      & \leq \|\phi_\alpha\|\|f-f_0\|_\gamma + \e + \|\phi_0\|\|f - f_0\|_\gamma \\
      & < (M+\|\phi_0\|+1)\e.
   \end{align*}

   \noindent
   This implies that $\tilde f$ is continuous on bounded subsets of $A^*$.

   \eqref{item:two} follows from Corollary 3.11.4 in \cite{Horvath}
   and the fact that $g$ is continuous on bounded subsets of $A^*$.
   \eqref{item:three} follows from \eqref{item:two} and
   Theorem \ref{thm:every-psi-lifts-to-Psi-iff-every-f-extends-to-F}.
\end{proof}

That every $\psi\in \mis(\fA)$ lifts to some $\Psi\in \mis_A(\fA\ptp_\gamma A)$
can be verified explicitly as follows. First, define a mapping
\[
  \Psi_0: \fA\tp A\to A,\quad \Psi_0\Bigprn{\sum_{i=1}^n f_i\tp a_i} = \sum_{i=1}^n \psi(f_i)a_i.
\]

\noindent
The mapping $\Psi_0$ is an algebra homomorphism with
$\phi(\Psi_0(f))=\psi(\phi\circ f)$, for all $\phi\in A^*$.
It is easily verified that $\|\Psi_0(f)\| \leq \|f\|_\gamma$,
for all $f\in \fA\tp A$. Being a continuous homomorphism on
a dense subspace of $\fA\ptp_\gamma A$, $\Psi_0$ can extend to a
homomorphism $\Psi:\fA\ptp_\gamma A \to A$ with the desired properties.

We summarize the above discussion in the following statement.

\begin{theorem}
\label{thm:fA tp A is a FA}
  The algebra $\fA \ptp_\gamma A$ is an admissible Banach $A$-valued function algebra
  on $X$ that satisfies all conditions in
  Theorem $\ref{thm:every-psi-lifts-to-Psi-iff-every-f-extends-to-F}$.
\end{theorem}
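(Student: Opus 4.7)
The plan is to assemble the three preceding lemmas, each of which handles one of the assertions. First, Lemma \ref{lem:fA tp A is a FA} already shows that every $f\in \fA\ptp_\gamma A$ is a continuous $A$-valued function on $X$ with $\|f\|_X\leq \|f\|_\gamma$, so $\fA\ptp_\gamma A$ embeds into $C(X,A)$ as a Banach $A$-valued function algebra. The only point to check here is that $\fA\ptp_\gamma A$ separates the points of $X$ and contains the constant functions; the former is inherited from $\fA$ (which already separates points) via the embedding $f\mapsto f\tp \U$, and the latter is clear since $\U_\fA\tp a$ maps to the constant function $a$.

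Next, admissibility is exactly the content of Lemma \ref{lem:fA ptp A is admissible}: for $\phi\in\mis(A)\subset A^*$ one has $\phi\circ f\in \fA$, hence $(\phi\circ f)\U\in \fA\ptp_\gamma A$ (via the embedding of $\fA$ as $\fA\tp\U$). So condition \eqref{eqn:admissible-A-valued-FA} holds.

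Third, the conditions of Theorem \ref{thm:every-psi-lifts-to-Psi-iff-every-f-extends-to-F} hold by item \eqref{item:three} of the unnamed lemma above, which proves condition \eqref{item:every-psi-lifts-to-Psi}: every character $\psi\in\mis(\fA)$ lifts to an $A$-character $\Psi\in\mis_A(\fA\ptp_\gamma A)$. The equivalence in Theorem \ref{thm:every-psi-lifts-to-Psi-iff-every-f-extends-to-F} then delivers the remaining conditions automatically. For the reader's convenience I would then recall the explicit construction of the lift: define $\Psi_0:\fA\tp A\to A$ on elementary tensors by $\Psi_0(f\tp a)=\psi(f)a$, extend linearly, observe $\phi(\Psi_0 f)=\psi(\phi\circ f)$ for all $\phi\in A^*$, and use the estimate $\|\Psi_0(f)\|\leq \|f\|_\gamma$ to extend $\Psi_0$ by continuity to a homomorphism $\Psi:\fA\ptp_\gamma A\to A$ with $\Psi|_\fA=\psi$.

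The only step that needs any thought is the bound $\|\Psi_0(f)\|\leq \|f\|_\gamma$, but this follows in the standard way: since $A$ is semisimple, $\|a\|$ is controlled by $\sup\{|\phi(a)|:\phi\in A^*_1\}$ in the Gelfand-type sense (more precisely, $\|a\|=\sup\{|\phi(a)|:\phi\in A^*_1\}$ by Hahn-Banach), and for each $\phi\in A^*_1$ we have $|\phi(\Psi_0 f)|=|\psi(\phi\circ f)|\leq \|\phi\circ f\|\leq \|f\|_\gamma$ by Lemma \ref{lem:fA ptp A is admissible}. Thus $\Psi_0$ is $\enorm_\gamma$-continuous on the dense subspace $\fA\tp A$ and extends uniquely, as claimed. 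The main (very mild) obstacle is just to make sure the extension is still an $A$-character, i.e.\ that the identity $\phi(\Psi f)=\psi(\phi\circ f)$ persists in the limit, which is immediate from continuity of $\phi$, $\psi$, and of the map $f\mapsto \phi\circ f$ (Lemma \ref{lem:fA ptp A is admissible}).
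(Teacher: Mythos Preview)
Your proposal is correct and mirrors the paper's approach exactly: the theorem is stated there as a summary of the three preceding lemmas together with the explicit construction of $\Psi_0$, and you assemble these ingredients in precisely the same order. The extra details you supply (point separation via $f\mapsto f\tp\U$, constants via $\U_\fA\tp a$, and the Hahn--Banach justification of $\|\Psi_0(f)\|\leq\|f\|_\gamma$ through Lemma~\ref{lem:fA ptp A is admissible}) are implicit in the paper and make the argument self-contained.
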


In \cite{Dineen-Harte-Taylor-I}, the Waelbroeck spectrum of elements
$f\in \fA\ptp_\gamma A$ has been studied. An examination of the results 
shows that, given $\psi\in \mis(\fA)$, the mapping $\psi\tp I_A$ there
coincides with the $A$-character $\Psi$ to which $\psi$ lifts. Hence,
the Waelbroeck spectrum $\sigma_W(f)$ of every $f\in \fA\ptp_\gamma A$ \cite[Definition 6]{Dineen-Harte-Taylor-I} coincides with the $A$-valued spectrum $\vsp_A(f)$.
That is
\[
  \sigma_W(f)=\vsp_A(f) = \set{\Psi(f):\Psi\in \mis_A(\fA \ptp_\gamma A)}.
\]
\end{example}


\bibliographystyle{amsplain}

\end{document}